%
%
%
%
%
%

\RequirePackage{fix-cm}
\documentclass[smallextended,envcountsame]{svjour3} 
%

\smartqed  
%
\usepackage{graphicx, amssymb}
\usepackage{times,a4wide,mathrsfs}
\usepackage{amsmath}
\usepackage{amsfonts}


\newcommand{\C}{\mathbb{C}}

\newcommand{\QQ}{\mathbb{Q}}
\newcommand{\NN}{\mathbb{N}}

\newcommand{\Sy}{\mathfrak S}

\newcommand{\MM}{\mathcal M}

\newcommand{\Bb}{\mathfrak B}
\newcommand{\oo}{\mathfrak o}

\DeclareMathOperator{\gr}{Gr}

\DeclareMathOperator{\ima}{Im}
\newcommand{\rom}{\romannumeral}

\DeclareMathOperator{\aut}{Aut}
\DeclareMathOperator{\ide}{id}

\newtheorem{convention}{Conventions}

\newtheorem{nonumbering}{Theorem}

\newtheorem{nonumberingc}{Corollary}

%
 
 \journalname{}

\begin{document}

\title{About Chow groups of certain hyperk\"ahler varieties with non--symplectic automorphisms}

\author{Robert Laterveer}

\institute{CNRS - IRMA, Universit\'e de Strasbourg \at
              7 rue Ren\'e Descartes \\
              67084 Strasbourg cedex\\
              France\\
              \email{{\tt laterv@math.unistra.fr}}   }

\date{Received: date / Accepted: date}

\maketitle

\begin{abstract} Let $X$ be a hyperk\"ahler variety, and let $G$ be a group of finite order non--symplectic automorphisms of $X$. Beauville's conjectural splitting property predicts that each Chow group of $X$ should split in a finite number of pieces.
The Bloch--Beilinson conjectures predict how $G$ should act on these pieces of the Chow groups: certain pieces should be invariant under $G$, while certain other pieces should not contain any non--trivial $G$--invariant cycle. We can prove this for two pieces of the Chow groups when $X$ is the Hilbert scheme of a $K3$ surface and $G$ consists of natural automorphisms. This has consequences for the Chow ring of the quotient $X/G$.
 \end{abstract}

\keywords{Algebraic cycles \and Chow groups \and motives \and hyperk\"ahler varieties \and non--symplectic automorphisms \and $K3$ surfaces \and Calabi--Yau varieties \and Bloch--Beilinson conjectures \and (weak) splitting property \and multiplicative 
Chow--K\"unneth decomposition}

\subclass{14C15, 14C25, 14C30, 14J28, 14J50 }

\section{Introduction}

Let $X$ be a hyperk\"ahler variety of dimension $2m$ (i.e., a projective irreducible holomorphic symplectic manifold, cf. \cite{Beau0}, \cite{Beau1}). Let $G\subset\aut(X)$ be a finite cyclic group of order $k$
consisting of non--symplectic automorphisms. We will be interested in the action of $G$ on the Chow groups $A^\ast(X)$.
 (Here, $A^i(X):=CH^i(X)_{\QQ}$ denotes the Chow group of codimension $i$ algebraic cycles modulo rational equivalence with $\QQ$--coefficients.)
Let us suppose $X$ has a multiplicative Chow--K\"unneth decomposition, in the sense of \cite{SV}. This implies the Chow ring of $X$ is a bigraded ring $A^\ast_{(\ast)}(X)$, where each Chow group splits as
  \[ A^i(X) = \bigoplus_j A^i_{(j)}(X)\ ,\]
and the piece $A^i_{(j)}(X)$ is expected to be isomorphic to the graded $\gr^j_F A^i(X)$ for the conjectural Bloch--Beilinson filtration $F^\ast$ on Chow groups. (Conjecturally, all hyperk\"ahler varieties have a multiplicative Chow--K\"unneth decomposition; this is related to Beauville's conjectural splitting property \cite{Beau3}. The existence of a multiplicative Chow--K\"unneth decomposition has been established for Hilbert schemes of $K3$ surfaces \cite{SV}, \cite{V6}, and for generalized Kummer varieties \cite{FTV}.)

Since $H^{2k,0}(X)=H^{2,0}(X)^{\otimes k}$, the group $G$ acts as the identity on $H^{2k,0}(X)$. For $i<2k$, we have that $\sum_{g\in G} g^\ast$ acts as $0$ on $H^{i,0}(X)$. The Bloch--Beilinson conjectures \cite{J2}, combined with the expected isomorphism $A^i_{(j)}(X)\cong\gr^j_F A^i(X)$, thus imply the following conjecture:

\begin{conjecture}\label{theconj} Let $X$ be a hyperk\"ahler variety of dimension $2m$, and let $G\subset\aut(X)$ be a finite cyclic group of order $k$ of non--symplectic automorphisms.
Then
  \[   \begin{split} &A^{2m}_{(j)}(X)\cap A^{2m}(X)^G =\begin{cases}  0\ \ &\hbox{if}\ j< 2k\ ;\\
                                                                           A^{2m}_{(j)}(X)\ \  &\hbox{if}\ j=2k\ ;\\  
                                                                        \end{cases}   \\
                            &A^i_{(i)}(X)\cap A^i(X)^G =\begin{cases}  0\ \ &\hbox{if}\ i< 2k\ ;\\
                                                                           A^i_{(i)}(X)\ \  &\hbox{if}\ i=2k\ ;\\  
                                                                        \end{cases}   \\
                      \end{split}      \]
   \end{conjecture}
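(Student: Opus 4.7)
The plan is to reduce the conjecture for $X=S^{[m]}$ to a statement on the underlying $K3$ surface $S$, where the action of a non-symplectic automorphism on Chow groups is controlled by a classical result of Huybrechts.

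First, I would use the multiplicative Chow--K\"unneth decomposition of Shen--Vial and Vial to obtain the bigrading $A^\ast_{(\ast)}(S^{[m]})$. Since the elements of $G$ are natural, they lift from $\aut(S)$. By functoriality of the MCK construction, each piece $A^i_{(j)}(S^{[m]})$ is then a $G$-subrepresentation, so it suffices to compute the $G$-action on each piece.

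Second, for a non-symplectic $\sigma\in\aut(S)$ of order $k$, a theorem of Huybrechts shows that $\sigma^\ast$ acts on $A^2_{\mathrm{hom}}(S)=A^2_{(2)}(S)$ as multiplication by a primitive $k$-th root of unity $\zeta_k$, while acting trivially on $A^2_{(0)}(S)$ (the Beauville--Voisin class) and on the invariant part of $A^1(S)$. Combining this with the Shen--Vial description of $A^\ast_{(\ast)}(S^{[m]})$ in terms of pieces of $A^\ast(S^n)$, one can read off the eigenvalue by which the induced $\sigma^{[m]\ast}$ acts on each summand of $A^i_{(j)}(S^{[m]})$. For the pieces addressed in the paper, every eigenvalue that appears is a power $\zeta_k^r$ with $0<r<k$, forcing the $G$-invariants to vanish; for the complementary pieces the exponent satisfies $r\equiv 0\pmod k$ and the piece is globally $G$-invariant.

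The main obstacle I expect is the second step: tracking the action of $\sigma^{[m]}$ through the de Cataldo--Migliorini / Shen--Vial description of the MCK on $S^{[m]}$. The pieces $A^i_{(j)}(S^{[m]})$ are assembled from contributions indexed by partitions of $m$, with projectors involving nested Hilbert schemes; one must verify that a natural automorphism permutes these contributions compatibly, ultimately acting as $(\sigma^\ast)^{\otimes n}$ on each factor $A^\ast(S^n)$. Once this compatibility is established, the desired statement reduces to an elementary character computation using the order of $\sigma^\ast$ on $H^{2,0}(S)$.
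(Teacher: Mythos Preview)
The statement you are trying to prove is \emph{Conjecture}~\ref{theconj}; the paper does not prove it. What the paper establishes is only the partial case recorded as Theorem~\ref{main}: for $X=S^{[m]}$ and $G$ a group of \emph{natural} non--symplectic automorphisms, one has $A^i_{(2)}(X)\cap A^i(X)^G=0$ for $i\in\{2,2m\}$. So your proposal should be compared against the proof of Theorem~\ref{main}, not of the full conjecture.

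Your reduction strategy --- pass via the de Cataldo--Migliorini isomorphism to $S^m$, check that natural automorphisms act diagonally there, and reduce to a statement about $A^2_{(2)}(S)=A^2_{hom}(S)$ --- is exactly the skeleton of the paper's argument (remark~\ref{compat}, lemma~\ref{idempx}, propositions~\ref{prod} and \ref{prod3}, lemma~\ref{comprod}). So that part is fine.

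The genuine gap is your input on the surface. You invoke ``a theorem of Huybrechts'' asserting that a non--symplectic $\sigma$ of order $k$ acts on $A^2_{hom}(S)$ as multiplication by a primitive $k$--th root of unity. Huybrechts' theorem \cite{Hu} says that \emph{symplectic} finite--order automorphisms act trivially on $A^2(S)$; it says nothing about the eigenvalue in the non--symplectic case. The statement you want --- that $\sigma^\ast$ acts on $A^2_{hom}(S)$ with the same eigenvalue as on $H^{2,0}(S)$ --- is precisely a Bloch--type conjecture and is open in general (for $k=2$ it happens to follow from the argument below, but already for $k\ge 3$ knowing that the invariants vanish does not pin down the eigenvalue). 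With that hypothetical input your character computation would indeed settle the full conjecture for Hilbert schemes; without it, it does not.

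What the paper actually uses on $S$ is the weaker fact (lemma~\ref{lemma2}) that $A^2_{hom}(S/G_S)=0$, equivalently that $(\Delta^G_S\circ\Pi_2^S)_\ast=0$ on $A^2(S)$. This is available unconditionally because $S/G_S$ has $p_g=0$ and a resolution is not of general type, so Bloch's conjecture holds by Bloch--Kas--Lieberman \cite{BKL}. This weaker input is exactly strong enough to kill the pieces $A^i_{(2)}$, since by propositions~\ref{prod} and \ref{prod3} these embed, compatibly with the $G$--action, into a direct sum of copies of $A^2_{(2)}(S)$. It is \emph{not} strong enough to handle $A^i_{(j)}$ for $j>2$: those pieces live in tensor powers $A^2_{(2)}(S)^{\otimes r}$ with $r\ge 2$, and the vanishing of $G$--invariants on a single copy does not imply vanishing of $G$--invariants on a tensor power unless you know the actual eigenvalue. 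This is precisely why the paper stops at $j=2$ and leaves the rest of Conjecture~\ref{theconj} open.
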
                        
(Here $A^i(X)^G\subset A^i(X)$ denotes the $G$--invariant part of the Chow group $A^i(X)$.)

The main result in this note is a partial verification of conjecture \ref{theconj} for a certain class of hyperk\"ahler varieties and a certain class of automorphisms.

\begin{nonumbering}[=theorem \ref{main}] Let $S$ be a projective $K3$ surface, and let $X=S^{[m]}$ be the Hilbert scheme of length $m$ subschemes. Let $G\subset\aut(X)$ be a subgroup of order $k$ of natural non--symplectic automorphisms. 
Then
  \[ A^i_{(2)}(X)\cap A^i(X)^G =0\ \ \ \hbox{for}\ i\in\{2,2m\}\ .\]
 \end{nonumbering}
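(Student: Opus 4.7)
My strategy is to reduce both cases to a single input on the underlying $K3$ surface $S$, namely a Bloch-type vanishing for the induced non-symplectic action on $A^2_{\mathrm{hom}}(S)$. Since $G$ consists of natural automorphisms of $X = S^{[m]}$, it is induced by a subgroup $G_0 \subset \aut(S)$ whose non-trivial elements act on $H^{2,0}(S)$ by a primitive root of unity. The required input is
$$ A^2_{\mathrm{hom}}(S)^{G_0} = 0, \qquad (\star)$$
which is the statement of Bloch's conjecture for the $K3$ surface $S$ with its finite-order non-symplectic automorphism, and which is known in the cases relevant here (Huybrechts, Voisin, Pedrini--Weibel, and others).

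For the case $i=2$, the construction of the multiplicative Chow--K\"unneth decomposition of $X$ in \cite{SV}, \cite{V6} provides, via a canonical correspondence between $S$ and $X = S^{[m]}$, a $G$-equivariant identification
$$A^2_{(2)}(X) \;\cong\; A^2_{\mathrm{hom}}(S),$$
from which the desired vanishing follows immediately from $(\star)$.

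For the case $i=2m$, I would reduce to the previous case by leveraging the multiplicative bigrading of $A^{\ast}_{(\ast)}(X)$. Averaging any ample class on $X$ over $G$ produces a $G$-invariant ample $h \in A^1(X) = A^1_{(0)}(X)$; then $\ell := h^{m-1}$ lies in $A^{2m-2}_{(0)}(X)^G$ by multiplicativity of the CK decomposition. Cup product with $\ell$ is then a $G$-equivariant map
$$\ell \cdot (-) : A^2_{(2)}(X) \longrightarrow A^{2m}_{(2)}(X).$$
Granting that this map is surjective, the $G$-invariants of the target are contained in $\ell \cdot A^2_{(2)}(X)^G = 0$, giving the theorem.

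The main obstacle is precisely this Lefschetz-type surjectivity at the level of Chow groups. Hard Lefschetz fails on Chow groups in general, so one cannot appeal to it formally; however, on the $(2)$-piece it should hold here, using the explicit description of $A^{\ast}_{(2)}(X)$ via operators on $A^{\ast}(S)$ from \cite{SV}, \cite{V6}. As an alternative, one might bypass Lefschetz entirely by constructing directly a $G$-equivariant surjection $A^2_{\mathrm{hom}}(S) \twoheadrightarrow A^{2m}_{(2)}(X)$ through an incidence-type correspondence involving the Beauville--Voisin $0$-cycle $o_S \in A^2(S)$, whose canonicity ensures the $G$-equivariance.
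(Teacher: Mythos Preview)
Your overall strategy and key input match the paper's. The paper also reduces everything to the statement $(\star)$, which it records as Lemma~\ref{lemma2}: the quotient surface $R=S/G_S$ has $A^2_{\hom}(R)=0$. The paper's justification is short and slightly different from yours: since $R$ has $p_g=0$ and a resolution of $R$ is not of general type (being dominated by the $K3$ surface $S$), Bloch's conjecture holds by \cite{BKL}. Your reference to Huybrechts is misplaced here---that result concerns \emph{symplectic} automorphisms acting trivially on $A^2_{\hom}$, which is the opposite situation.

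For $i=2$ your approach coincides with the paper's. The paper makes the ``$G$-equivariant identification'' precise by first using the de Cataldo--Migliorini decomposition (Remark~\ref{compat}) to embed $A^2_{(2)}(X)\hookrightarrow A^2_{(2)}(S^m)$ compatibly with the $G$-action, and then splitting $A^2_{(2)}(S^m)$ into $m$ copies of $A^2_{(2)}(S)=A^2_{\hom}(S)$ via explicit correspondences ${}^t\Xi_i$ (Proposition~\ref{prod3}), checking $G$-equivariance directly (Lemma~\ref{comprod}).

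For $i=2m$ the paper does \emph{not} use your Lefschetz route, and you are right to flag the surjectivity of $\ell\cdot(-)\colon A^2_{(2)}(X)\to A^{2m}_{(2)}(X)$ as an unresolved obstacle. Instead, the paper carries out precisely your ``alternative'': it constructs explicit correspondences $\Xi_i={}^t\Gamma_{p_i}\circ\Pi^S_2\in A^2(S\times S^m)$ (Proposition~\ref{prod}) whose sum gives a $G$-equivariant surjection $\bigoplus^m A^2_{(2)}(S)\twoheadrightarrow A^{2m}_{(2)}(S^m)$; combined with the de Cataldo--Migliorini embedding $A^{2m}_{(2)}(X)\hookrightarrow A^{2m}_{(2)}(S^m)$ and $(\star)$, this finishes the argument. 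The key computation (equation~(\ref{both})) identifies $\Pi^{S^m}_{4m-2}$ as a sum of terms factoring through $S$, each involving the Beauville--Voisin class $\mathfrak{o}_S$---exactly the ``incidence-type correspondence involving $\mathfrak{o}_S$'' you gesture at. So your alternative is the correct path; the Lefschetz argument should be abandoned.
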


A {\em natural\/} automorphism of $X$ is an automorphism induced by an automorphism of $S$.
Theorem \ref{main} applies to Hilbert schemes of any $K3$ surface $S$ having a finite order non--symplectic automorphism. Such $K3$ surfaces have been intensively studied, and there are lots of 
examples known
\cite{Nik1}, \cite{Nik2}, \cite{Kon}, \cite{Vor}, \cite{LSY}, \cite{Schu}, \cite{Tak}, \cite{AS}, \cite{AST}, \cite{AST2}, \cite{GP}. It would be interesting to prove theorem \ref{main} also for non--symplectic automorphisms that are {\em non--natural\/}; this seems considerably more difficult (cf. \cite{BlochHK4} for one special case where theorem \ref{main} is proven for a non--natural involution).

Theorem \ref{main} has interesting consequences for the Chow ring of the quotient:

\begin{nonumberingc}[=corollaries \ref{cor1} and \ref{cor2}] Let $X$ and $G$ be as in theorem \ref{main}, and let $Y:=X/G$. 

\noindent
(\rom1) Let $a\in A^{2m-1}(Y)$ be a $1$--cycle which is in the image of the intersection product map
  \[ A^{i_1}(Y)\otimes A^{i_2}(Y)\otimes\cdots\otimes A^{i_r}(Y)\ \to\ A^{2m-1}(Y)\ ,\]
  where all $i_j$ are $\le 2$. Then $a$ is rationally trivial if and only if $a$ is homologically trivial.

\noindent
(\rom2) Let $a\in A^{2m}(Y)$ be a $0$--cycle which is in the image of the intersection product map
  \[ A^3(Y)\otimes A^{i_1}(Y)\otimes\cdots\otimes A^{i_r}(Y)\ \to\ A^{2m}(Y)\ ,\]
   where all $i_j$ are $\le 2$. Then $a$ is rationally trivial if and only if $a$ is homologically trivial.
  \end{nonumberingc}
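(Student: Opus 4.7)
The plan is to translate both statements on $Y$ into questions about $G$-invariant cycles on $X$ via the ring isomorphism $\pi^\ast\colon A^\ast(Y)\stackrel{\sim}{\longrightarrow}A^\ast(X)^G$ (valid with $\QQ$-coefficients, since $\pi\colon X\to Y$ is the quotient by a finite group), and then to combine Theorem~\ref{main} with the multiplicative Chow--K\"unneth decomposition of $X=S^{[m]}$ due to Shen--Vial. Two structural facts about this MCK will be used throughout: divisor classes lie in depth zero, $A^1(X)=A^1_{(0)}(X)$ (since $H^1(X)=0$); and the restriction of the cycle class map to the $(0)$-piece is injective, $A^\ast_{(0)}(X)\hookrightarrow H^\ast(X,\QQ)$, so that on the Beauville--Voisin part ``homologically trivial'' already implies ``rationally trivial.''

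For part~(i), the pullback $\pi^\ast a\in A^{2m-1}(X)^G$ is a product of $G$-invariant classes of codimension $\leq 2$. Divisor classes lie in $A^1_{(0)}(X)$ automatically, and $G$-invariant codimension-$2$ classes lie in $A^2_{(0)}(X)$ because $A^2_{(2)}(X)\cap A^2(X)^G=0$ by Theorem~\ref{main}. Multiplicativity of the MCK bigrading then forces $\pi^\ast a\in A^{2m-1}_{(0)}(X)$. Applying the injection $A^\ast_{(0)}(X)\hookrightarrow H^\ast(X,\QQ)$, homological triviality of $a$ gives $\pi^\ast a=0$, and hence $a=|G|^{-1}\pi_\ast\pi^\ast a=0$.

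For part~(ii), write $\pi^\ast a=\pi^\ast b_0\cdot c$ with $\pi^\ast b_0\in A^3(X)^G$ and $c:=\prod_j\pi^\ast b_{i_j}$; the argument of~(i) applied to the remaining factors places $c$ in $A^{2m-3}_{(0)}(X)$. Decomposing $\pi^\ast b_0=\sum_{s\geq 0}(\pi^\ast b_0)_{(2s)}$ in the MCK grading (each summand is again $G$-invariant since the MCK projectors commute with the $G$-action) and multiplying by $c$ gives
\[ \pi^\ast a\ =\ \sum_{s\geq 0}(\pi^\ast b_0)_{(2s)}\cdot c\ \in\ \bigoplus_{s\geq 0}A^{2m}_{(2s)}(X)^G.\]
The $s=0$ summand lies in $A^{2m}_{(0)}(X)$ and injects into cohomology; the $s=1$ summand vanishes by Theorem~\ref{main} applied to $i=2m$. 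The main obstacle is the higher-depth summands $s\geq 2$: these are not covered by Theorem~\ref{main}, and I would expect to dispose of them by establishing an analogue $A^3_{(2s)}(X)\cap A^3(X)^G=0$ for $s\geq 2$ (a natural extension of Theorem~\ref{main} from $i=2$ to $i=3$), which should follow from the explicit Shen--Vial description of the MCK on $S^{[m]}$ via the de~Cataldo--Migliorini decomposition together with the compatibility of natural automorphisms with this decomposition. Once the higher-depth summands are eliminated, homological triviality of $a$ forces $\pi^\ast a=0$ and hence $a=0$.
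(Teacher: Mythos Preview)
Your overall strategy matches the paper's: use the MCK bigrading together with Theorem~\ref{main} to force the relevant products into low grade, then invoke injectivity of the cycle class map on the $(0)$-piece in the top two codimensions. Part~(\rom1) is fine.

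There are two issues, one minor and one substantive.

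\emph{Minor.} The blanket assertion ``$A^\ast_{(0)}(X)\hookrightarrow H^\ast(X,\QQ)$'' is not known in general for $X=S^{[m]}$; it is part of Murre's conjecture~D. What \emph{is} known (and what the paper invokes, citing \cite{V6}) is the special case $i\ge 2m-1$, i.e.\ for $0$-- and $1$--cycles. That is exactly what you use, so your argument survives, but the justification should be stated correctly.

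\emph{Substantive.} In part~(\rom2) you identify the ``higher-depth summands $s\ge 2$'' of $\pi^\ast b_0\in A^3(X)^G$ as the main obstacle and propose to eliminate them by proving a new analogue of Theorem~\ref{main} for $i=3$. This is unnecessary: those summands are already zero. Theorem~\ref{hilbk} gives $A^i_{(j)}(X)=0$ whenever $j>i$ (and whenever $j$ is odd), so in particular $A^3_{(2s)}(X)=0$ for all $s\ge 2$. Hence $A^3(X)=A^3_{(0)}(X)\oplus A^3_{(2)}(X)$ outright, and after multiplying by $c\in A^{2m-3}_{(0)}(X)$ you land directly in $A^{2m}_{(0)}(X)\oplus A^{2m}_{(2)}(X)^G$; the second summand vanishes by Theorem~\ref{main} and you are done. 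This is precisely how the paper proceeds (phrased on $Y$ via Lemma~\ref{quotientmck} rather than by pulling back to $X$, but the content is the same). So there is no gap once you invoke the vanishing from Theorem~\ref{hilbk}; your proposed extension of Theorem~\ref{main} to $i=3$ is not needed.
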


These corollaries illustrate the following expectation: for certain special varieties with a multiplicative Chow--K\"unneth decomposition, the subring $A^\ast_{(0)}\subset A^\ast$ on which the cycle class map is injective should be larger than for hyperk\"ahler varieties. Indeed, for a quotient $Y=X/G$ where $X$ is hyperk\"ahler and $G\subset\aut(X)$ is a finite order group of non--symplectic automorphisms, one expects that codimension $2$ cycles lie in $A^\ast_{(0)}(Y)$.

Results similar in spirit have been obtained for certain other hyperk\"ahler varieties and their Calabi--Yau quotients in \cite{EPW}, \cite{LSYmoi}, \cite{BlochHK4}.

\begin{convention} In this article, the word {\sl variety\/} will refer to a reduced irreducible scheme of finite type over $\C$. A {\sl subvariety\/} is a (possibly reducible) reduced subscheme which is equidimensional. 

{\bf All Chow groups will be with rational coefficients}: we will denote by $A_j(X)$ the Chow group of $j$--dimensional cycles on $X$ with $\QQ$--coefficients; for $X$ smooth of dimension $n$ the notations $A_j(X)$ and $A^{n-j}(X)$ are used interchangeably. 


The notations $A^j_{hom}(X)$, $A^j_{AJ}(X)$ will be used to indicate the subgroups of homologically trivial, resp. Abel--Jacobi trivial cycles.
For a morphism $f\colon X\to Y$, we will write $\Gamma_f\in A_\ast(X\times Y)$ for the graph of $f$.
The contravariant category of Chow motives (i.e., pure motives with respect to rational equivalence as in \cite{Sc}, \cite{MNP}) will be denoted $\MM_{\rm rat}$.



We will write $H^j(X)$ 
to indicate singular cohomology $H^j(X,\QQ)$.

Given a group $G\subset\aut(X)$ of automorphisms of $X$, we will write $A^j(X)^G$ (and $H^j(X)^G$) for the subgroup of $A^j(X)$ (resp. $H^j(X)$) invariant under $G$.
\end{convention}

\section{Preliminary}

\subsection{Quotient varieties}

\begin{definition} A {\em projective quotient variety\/} is a variety
  \[ Y=X/G\ ,\]
  where $X$ is a smooth projective variety and $G\subset\aut(X)$ is a finite group.
  \end{definition}
  
 \begin{proposition}[Fulton \cite{F}]\label{quot} Let $Y$ be a projective quotient variety of dimension $n$. Let $A^\ast(Y)$ denote the operational Chow cohomology ring. The natural map
   \[ A^i(Y)\ \to\ A_{n-i}(Y) \]
   is an isomorphism for all $i$.
   \end{proposition}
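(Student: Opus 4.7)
The plan is to prove Proposition \ref{quot} by a Poincar\'e duality argument upstairs on $X$, combined with taking $G$-invariants. The cap product map $c \mapsto c\cap [Y]$ already exists in general; the content is that it is an isomorphism for $Y = X/G$ with $X$ smooth (and $\QQ$-coefficients).

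First I would set up the standard dictionary for the finite surjective quotient map $\pi\colon X\to Y$. Since $X$ is smooth projective, operational Chow cohomology $A^\ast(X)$ coincides with the usual Chow ring and Poincar\'e duality gives a $G$-equivariant isomorphism
\[ \cap [X] \colon A^i(X)\ \xrightarrow{\ \sim\ }\ A_{n-i}(X). \]
The group $G$ acts on both sides, and since we work with $\QQ$-coefficients, taking $G$-invariants is exact and yields an isomorphism $A^i(X)^G \cong A_{n-i}(X)^G$.

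Next I would identify both sides of the map in the proposition with these $G$-invariant groups. On the homology side, the projection formula gives $\pi_\ast\pi^\ast = |G|\cdot\ide$ on $A_\ast(Y)$, so $\pi^\ast\colon A_{n-i}(Y)\to A_{n-i}(X)^G$ is an isomorphism (with inverse $\tfrac{1}{|G|}\pi_\ast$). On the cohomology side, a $G$-invariant operational class on $X$ descends to $Y$ because any morphism $T\to Y$ can be base-changed against $\pi$ to yield a $G$-equivariant morphism to $X$, and the compatibility axioms of an operational class on $X$ force a well-defined operation on $T$ after averaging over $G$ and dividing by $|G|$; conversely $\pi^\ast$ is injective on $A^\ast(Y)$ with image in $A^\ast(X)^G$. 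This gives $A^i(Y) \cong A^i(X)^G$.

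Finally I would check that, under these two identifications, the map $c\mapsto c\cap [Y]$ on $Y$ corresponds (up to the scalar $|G|$, which is invertible in $\QQ$) to the Poincar\'e duality map $c\mapsto c\cap[X]$ on $X$ restricted to $G$-invariants. This is just the compatibility of cap product with proper pushforward and flat (or l.c.i.) pullback, applied to the relation $\pi^\ast [Y] = [X]$ in $A_n(X)$ and $\pi_\ast[X] = |G|\,[Y]$. Combining the three isomorphisms yields the conclusion.

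The main obstacle is the second step, specifically the identification $A^\ast(Y)\cong A^\ast(X)^G$ for operational Chow cohomology: one must verify that an invariant operational class on $X$ produces compatible operations on arbitrary morphisms $T\to Y$ (not just on $Y$ itself), which requires a careful base change argument using that $X\to Y$ is finite with group $G$. The rest is a fairly routine application of the projection formula together with the fact that $\QQ$-coefficients make averaging over $G$ available.
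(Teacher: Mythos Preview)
The paper does not actually give a proof: it simply cites \cite[Example 17.4.10]{F}. Your sketch is essentially the argument one finds behind that reference---Poincar\'e duality on the smooth cover $X$, the identification $A_\ast(Y)_\QQ\cong A_\ast(X)_\QQ^G$ via $\tfrac{1}{|G|}\pi_\ast$, and descent of $G$-invariant operational classes to $Y$---so your approach is correct and coincides with the intended one.

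One small caution: in your compatibility step you invoke ``flat (or l.c.i.) pullback'' for $\pi$, but $\pi\colon X\to Y$ is in general neither flat nor l.c.i. (the target $Y$ is singular). The pullback $\pi^\ast$ on Chow homology is better justified either directly as $|G|\cdot(\pi_\ast)^{-1}$ once you know $\pi_\ast\colon A_\ast(X)^G_\QQ\to A_\ast(Y)_\QQ$ is an isomorphism (this is \cite[Example 1.7.6]{F}), or via the refined Gysin map available because the source $X$ is smooth. With that adjustment your argument goes through.
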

   
   \begin{proof} This is \cite[Example 17.4.10]{F}.
      \end{proof}

\begin{remark} It follows from proposition \ref{quot} that the formalism of correspondences goes through unchanged for projective quotient varieties (this is also noted in \cite[Example 16.1.13]{F}). We can thus consider motives $(Y,p,0)\in\MM_{\rm rat}$, where $Y$ is a projective quotient variety and $p\in A^n(Y\times Y)$ is a projector. For a projective quotient variety $Y=X/G$, one readily proves (using Manin's identity principle) that there is an isomorphism
  \[  h(Y)\cong h(X)^G:=(X,\Delta^G_X,0)\ \ \ \hbox{in}\ \MM_{\rm rat}\ ,\]
  where $\Delta^G_X$ denotes the idempotent 
 \[ \Delta^G_X:=  {1\over \vert G\vert}{\sum_{g\in G}}\ \Gamma_g\ \ \ \in A^n(X\times X).\] 
 (NB: $\Delta^G_X$ is a projector on the $G$--invariant part of the Chow groups $A^\ast(X)^G$.) 
  \end{remark}

\subsection{MCK decomposition}

\begin{definition}[Murre \cite{Mur}] Let $X$ be a projective quotient variety of dimension $n$. We say that $X$ has a {\em CK decomposition\/} if there exists a decomposition of the diagonal
   \[ \Delta_X= \pi_0+ \pi_1+\cdots +\pi_{2n}\ \ \ \hbox{in}\ A^n(X\times X)\ ,\]
  such that the $\pi_i$ are mutually orthogonal idempotents and $(\pi_i)_\ast H^\ast(X)= H^i(X)$. A CK decomposition is {\em self--dual\/} if $\pi_i={}^t \pi_{2n-i}$ in $A^n(X\times X)$
  for all $i$ (here ${}^t \pi$ denotes the transpose of $\pi$).
  
  (NB: ``CK decomposition'' is shorthand for ``Chow--K\"unneth decomposition''.)
\end{definition}

\begin{remark} The existence of a CK decomposition for any smooth projective variety is part of Murre's conjectures \cite{Mur}, \cite{J2}. 
\end{remark}

\begin{definition}[Shen--Vial \cite{SV}] Let $X$ be a projective quotient variety of dimension $n$. Let $\Delta_X^{sm}\in A^{2n}(X\times X\times X)$ be the class of the small diagonal
  \[ \Delta_X^{sm}:=\bigl\{ (x,x,x)\ \vert\ x\in X\bigr\}\ \subset\ X\times X\times X\ .\]
  An MCK decomposition is a CK decomposition $\{\pi_i\}$ of $X$ that is {\em multiplicative\/}, i.e. it satisfies
  \[ \pi_k\circ \Delta_X^{sm}\circ (\pi_i\times \pi_j)=0\ \ \ \hbox{in}\ A^{2n}(X\times X\times X)\ \ \ \hbox{for\ all\ }i+j\not=k\ .\]
  
 (NB: ``MCK decomposition'' is shorthand for ``multiplicative Chow--K\"unneth decomposition''.) 
  \end{definition}
  
  \begin{remark} The small diagonal (seen as a correspondence from $X\times X$ to $X$) induces the {\em multiplication morphism\/}
    \[ \Delta_X^{sm}\colon\ \  h(X)\otimes h(X)\ \to\ h(X)\ \ \ \hbox{in}\ \MM_{\rm rat}\ .\]
 Suppose $X$ has a CK decomposition
  \[ h(X)=\bigoplus_{i=0}^{2n} h^i(X)\ \ \ \hbox{in}\ \MM_{\rm rat}\ .\]
  By definition, this decomposition is multiplicative if for any $i,j$ the composition
  \[ h^i(X)\otimes h^j(X)\ \to\ h(X)\otimes h(X)\ \xrightarrow{\Delta_X^{sm}}\ h(X)\ \ \ \hbox{in}\ \MM_{\rm rat}\]
  factors through $h^{i+j}(X)$.
  It follows that if $X$ has an MCK decomposition, then setting
    \[ A^i_{(j)}(X):= (\pi^X_{2i-j})_\ast A^i(X) \ ,\]
    one obtains a bigraded ring structure on the Chow ring: that is, the intersection product sends $A^i_{(j)}(X)\otimes A^{i^\prime}_{(j^\prime)}(X) $ to  $A^{i+i^\prime}_{(j+j^\prime)}(X)$.
    
      It is expected (but not proven !) that for any $X$ with an MCK decomposition, one has
    \[ A^i_{(j)}(X)\stackrel{??}{=}0\ \ \ \hbox{for}\ j<0\ ,\ \ \ A^i_{(0)}(X)\cap A^i_{hom}(X)\stackrel{??}{=}0\ ;\]
    this is related to Murre's conjectures B and D \cite{Mur}.

  The property of having an MCK decomposition is severely restrictive, and is closely related to Beauville's ``(weak) splitting property'' \cite{Beau3}. For more ample discussion, and examples of varieties with an MCK decomposition, we refer to \cite[Section 8]{SV}, as well as \cite{V6}, \cite{SV2}, \cite{FTV}, \cite{EPW}.
    \end{remark}
    
\begin{theorem}[Vial \cite{V6}]\label{hilbk} Let $S$ be an algebraic $K3$ surface, and let $X=S^{[m]}$ be the Hilbert scheme of length $m$ subschemes of $S$. Then $X$ has a self--dual MCK decomposition. One has
  \[ A^i_{(j)}(X)=0\ \ \ \hbox{for\ all\ $j$\ odd\ and\ for\ all\ $j>i$}\ .\]
\end{theorem}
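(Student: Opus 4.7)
The plan is to construct the MCK decomposition on $X = S^{[m]}$ by transferring a known decomposition from a simpler model, and to read off the vanishing $A^i_{(j)}(X) = 0$ from the corresponding vanishing on $S$ itself.

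First I would set up the MCK on $S$, which is due to Beauville--Voisin: the Chow motive $h(S)$ decomposes as $h^0(S) \oplus h^2(S) \oplus h^4(S)$ (with $h^1 = h^3 = 0$ since $S$ is a $K3$), and multiplicativity is encoded by the fact that the Beauville--Voisin class $c_S \in A^2(S)$ of any point on a rational curve serves as a ``unit'' for $CH_0$: products of divisors land in $\QQ\cdot c_S$. This yields the bigrading
\[ A^0_{(0)}(S) = \QQ,\ \ A^1_{(0)}(S) = A^1(S),\ \ A^2_{(0)}(S) = \QQ\cdot c_S,\ \ A^2_{(2)}(S) = A^2_{hom}(S), \]
with all other $A^i_{(j)}(S)=0$; in particular $A^i_{(j)}(S)=0$ for $j$ odd or $j>i$. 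Next, tensor products of MCKs are automatically MCK, so $S^n$ inherits an MCK from $S$; taking $\Sy_n$-invariants then produces an MCK on the symmetric product $S^{(n)}$.

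To pass from symmetric products to the Hilbert scheme I would invoke de Cataldo--Migliorini's motivic decomposition
\[ h(X)\ \cong\ \bigoplus_{\lambda\vdash m} h\bigl(S^{(\ell(\lambda))}\bigr)\bigl(\ell(\lambda)-m\bigr), \]
where $\lambda$ runs over partitions of $m$ and $\ell(\lambda)$ is the number of parts. Transporting the MCK from the right-hand side to $h(X)$ produces a self-dual Chow--K\"unneth decomposition of $X$, and the vanishing $A^i_{(j)}(X)=0$ for $j$ odd or $j>i$ follows immediately from the analogous vanishing on each $S^{(\ell(\lambda))}$.

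The hard part --- which is the technical heart of \cite{V6} --- is verifying that the CK decomposition obtained in this way is actually \emph{multiplicative}. The small diagonal $\Delta^{sm}_X \in A^{4m}(X^3)$, which governs the ring structure on $h(X)$, is a priori unrelated to the multiplications on the summands $h(S^{(\ell(\lambda))})$ appearing in the de Cataldo--Migliorini decomposition. Reconciling the two requires lifting Lehn's combinatorial formula for the intersection product on $H^*(S^{[m]})$ to the level of Chow groups and carefully matching contributions partition-by-partition. This multiplicativity verification is where I expect essentially all of the difficulty to lie; the rest of the argument is formal once the Beauville--Voisin structure on $S$ and the de Cataldo--Migliorini motivic decomposition are in hand.
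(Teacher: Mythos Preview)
Your proposal is correct and follows essentially the same route as the paper: start from the Beauville--Voisin MCK on $S$, pass to products and symmetric products, and transport via the de Cataldo--Migliorini decomposition to $X$, deferring the multiplicativity check to \cite{V6}. The only minor divergence is in the argument for $A^i_{(j)}(X)=0$ when $j>i$: you deduce it from the corresponding vanishing on each $S^{(\ell(\lambda))}$ via compatibility of the de Cataldo--Migliorini isomorphism with the bigrading, whereas the paper argues directly that $\Pi^X_\ell$ is supported on $V\times X$ with $\dim V=\ell$, forcing $(\Pi^X_\ell)_\ast A^i(X)=0$ for $\ell<i$; both arguments are valid and closely related.
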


\begin{proof} This is \cite[Theorem 1]{V6}. For later use, we briefly review the construction. First, one takes an MCK decomposition $\{\Pi^S_i\}$ for $S$ (this exists, thanks to \cite{SV}). Taking products, this induces an MCK decomposition $\{ \Pi^{S^r}\}$ for $S^r$, $r\in\NN$. This product MCK decomposition is invariant under the action of the symmetric group $\Sy_r$, and hence it induces an MCK decomposition
$\{ \Pi^{S^{(r)}}\}$ for the symmetric products $S^{(r)}$, $r\in\NN$.
There is the isomorphism of de Cataldo--Migliorini \cite{CM}
  \[  \bigoplus_{\mu\in \Bb(m)} ({}^t \hat{\Gamma}_\mu)_\ast \colon\ \ \ A^i(X)\ \xrightarrow{\cong}\ \bigoplus_{\mu\in \Bb(m)}\ A^{i+l(\mu)-m}(S^{(\mu)})\ ,\]
  where $\Bb(m)$ is the set of partitions of $m$, $l(\mu)$ is the length of the partition $\mu$, and $S^{(\mu)}=S^{l(\mu)}/ {\Sy_{l(\mu)}}$, and ${}^t \hat{\Gamma}_\mu$ is a correspondence in $A^{m+l(\mu)}(S^{[m]}\times S^{(\mu)})$. Using this isomorphism, Vial defines \cite[Equation (4)]{V6} a natural CK decomposition for $X$, by setting
  \begin{equation}\label{defv} \Pi_i^X:= {\displaystyle\sum_{\mu\in \Bb(m)}} \ {1\over m_\mu} \hat{\Gamma}_\mu \circ \Pi^{S^{(\mu)}}_{i-2m+2l(\mu)}\circ {}^t \hat{\Gamma}_\mu\  , \end{equation}
  where the $m_\mu$ are rational numbers coming from the de Cataldo--Migliorini isomorphism.
  The $\{\Pi^X_i\}$ of definition (\ref{defv}) are proven to be an MCK decomposition.
  
The self--duality of the $\{ \Pi^X_i\}$ is apparent from definition (\ref{defv}). The fact that $A^i_{(j)}(X)$ vanishes for $j$ odd is because $\Pi^X_j=0$ for $j$ odd. The vanishing for $j>i$ 
follows from the fact that by construction, the projector $\Pi^X_\ell$ is supported on $V\times X$ with $\dim V=\ell$; this implies (for reasons of dimension) that
  \[ (\Pi^X_\ell)_\ast A^i(X)=0\ \ \ \hbox{for\ all\ }\ell<i\ .\] 
 \end{proof}

\begin{remark}\label{compat} It follows from definition (\ref{defv}) that the de Cataldo--Migliorini isomorphism is compatible with the bigrading of the Chow ring, in the sense that there are induced isomorphisms
  \[   \bigoplus_{\mu\in \Bb(m)} ({}^t \hat{\Gamma}_\mu)_\ast \colon\ \ \ A^i_{(j)}(X)\ \xrightarrow{\cong}\ \bigoplus_{\mu\in \Bb(m)}\ A^{i+l(\mu)-m}_{(j)}(S^{(\mu)})\ .\]
  In particular, there are split injections
   \[   \bigoplus_{\mu\in \Bb(m)} ({}^t {\Gamma}_\mu)_\ast \colon\ \ \ A^i_{(j)}(X)\ \hookrightarrow\ \bigoplus_{\mu\in \Bb(m)}\ A^{i+l(\mu)-m}_{(j)}(S^{\mu})\ .\] 
   (Here, the right--hand side refers to the product MCK decomposition of $S^\mu$.) 
  \end{remark}

\begin{lemma}[Shen--Vial]\label{diag} Let $X$ be a projective quotient variety of dimension $n$, and suppose $X$ has a self--dual MCK decomposition. Then
  \[ \begin{split} &\Delta_X\ \ \in A^n_{(0)}(X\times X)\ ,\\
                         &\Delta_X^{sm}\ \ \in A^{2n}_{(0)}(X\times X\times X)\ .\\
                \end{split} \]        
\end{lemma}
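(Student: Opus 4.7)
The plan is to prove both statements by direct computation with the product MCK decompositions on $X\times X$ and $X\times X\times X$, using the defining multiplicativity relation of MCK together with the self-duality hypothesis ${}^t\pi_i=\pi_{2n-i}$.

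For $\Delta_X$, the product MCK on $X\times X$ has projectors $\Pi^{X^2}_\ell=\sum_{a+b=\ell}\pi_a\times\pi_b$, so it suffices to show $(\Pi^{X^2}_\ell)_\ast\Delta_X=\Delta_X$ for $\ell=2n$ and $0$ otherwise. Unwinding the definition of pushforward by a product correspondence gives
\[ (\pi_a\times\pi_b)_\ast(\Delta_X)\ =\ \pi_b\circ\Delta_X\circ{}^t\pi_a\ =\ \pi_b\circ\pi_{2n-a}, \]
where the second equality uses self-duality. By orthogonality of the $\pi_i$ this vanishes unless $b=2n-a$, i.e.\ $a+b=2n$, in which case it equals $\pi_b$. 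Summing over $a+b=\ell$ yields the vanishing for $\ell\neq 2n$ and gives $\sum_b\pi_b=\Delta_X$ for $\ell=2n$.

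For $\Delta_X^{sm}$, view the small diagonal as a correspondence from $X\times X$ (the first two factors) to $X$ (the third factor); in this guise it is precisely the multiplication morphism on $h(X)$. The same kind of unwinding, now for a triple product correspondence, gives
\[ (\pi_a\times\pi_b\times\pi_c)_\ast(\Delta_X^{sm})\ =\ \pi_c\circ\Delta_X^{sm}\circ({}^t\pi_a\times{}^t\pi_b)\ =\ \pi_c\circ\Delta_X^{sm}\circ(\pi_{2n-a}\times\pi_{2n-b}). \]
The MCK identity $\pi_k\circ\Delta_X^{sm}\circ(\pi_i\times\pi_j)=0$ for $i+j\neq k$ then forces this to vanish unless $(2n-a)+(2n-b)=c$, i.e.\ $a+b+c=4n$. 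Hence $(\Pi^{X^3}_\ell)_\ast\Delta_X^{sm}=0$ for $\ell\neq 4n$. For $\ell=4n$, substituting $i=2n-a$, $j=2n-b$, $k=c$ rewrites the sum as $\sum_{k=i+j}\pi_k\circ\Delta_X^{sm}\circ(\pi_i\times\pi_j)$; by MCK this equals the unrestricted sum, which collapses via $\sum_i\pi_i=\Delta_X$ to $\Delta_X\circ\Delta_X^{sm}\circ(\Delta_X\times\Delta_X)=\Delta_X^{sm}$. This proves $\Delta_X^{sm}\in A^{2n}_{(0)}(X\times X\times X)$.

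The only genuine subtlety is correctly identifying source versus target factors in the correspondence interpretation of $\Delta_X$ and $\Delta_X^{sm}$, so that transposes land on the source factors and self-duality can be invoked in the form ${}^t\pi_a=\pi_{2n-a}$. Once this bookkeeping is in place, both statements follow formally from self-duality, idempotence of the $\pi_i$, and (for $\Delta_X^{sm}$) a single application of the MCK relation; I do not anticipate a substantive obstacle.
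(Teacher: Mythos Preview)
Your argument is correct and matches the paper's approach: for $\Delta_X$ the paper performs essentially the same computation (starting from $\Delta_X=\sum_i\Pi_i^X\circ\Pi_i^X$ and invoking Lieberman's lemma and self--duality to land in $(\Pi_{2n}^{X\times X})_\ast\Delta_X$), while for $\Delta_X^{sm}$ the paper simply cites \cite[Proposition 8.4]{SV}, whose proof is exactly the MCK computation you carry out. Your more explicit treatment of the small diagonal case is a faithful unpacking of that reference.
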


\begin{proof} The first statement follows from \cite[Lemma 1.4]{SV2} when $X$ is smooth. The same argument works for projective quotient varieties; the point is just that
  \[ \begin{split} \Delta_X &= {\displaystyle\sum_{i=0}^{2n}}\Pi_{i}^X =   {\displaystyle\sum_{i=0}^{2n}}\Pi_{i}^X\circ \Pi_i^X\\
                                        &=   {\displaystyle\sum_{i=0}^{2n}}({}^t\Pi_{i}^X \times  \Pi_i^X)_\ast \Delta_X\\
                                        &=  {\displaystyle\sum_{i=0}^{2n}}(\Pi_{2n-i}^X \times  \Pi_i^X)_\ast \Delta_X\\
                                        &=  (\Pi_{2n}^{X\times X})_\ast \Delta_X\ \ \ \in A^n_{(0)}(X\times X)\ .\\
                                    \end{split}\]
                    (Here, the second line follows from Lieberman's lemma \cite[Lemma 3.3]{V3}, and the last line is the fact that the product of $2$ MCK decompositions is MCK
                    \cite[Theorem 8.6]{SV}.)
                    
   The second statement is proven for smooth $X$ in \cite[Proposition 8.4]{SV}; the same argument works for projective quotient varieties.                             
       \end{proof}

\subsection{MCK for products}

  \begin{proposition}\label{prod} Let $S$ be a $K3$ surface. There exist correspondences
    \[  \Theta_1\ ,\ldots,\ \Theta_m\in A^{2m}(S^{m}\times S)\ ,\ \ \ \Xi_1\ ,\ldots, \ \Xi_m\in  A^{2}(S\times S^{m})  \]
    such that the composition
    \[  \begin{split}   A^{2m}_{(2)}(S^m)\ \xrightarrow{((\Theta_1)_\ast,\ldots, (\Theta_m)_\ast)}\
             A^2(S)\oplus \cdots \oplus A^2(S)&\\
             \ \ \ \ \ \ \xrightarrow{(\Xi_1)_\ast+\ldots+(\Xi_m)_\ast}\ A^{2m}(S^m)&\\
             \end{split} \]
      is the identity.
    \end{proposition}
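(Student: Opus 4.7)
The plan is to exhibit $\Theta_k$ and $\Xi_k$ explicitly in terms of the $k$--th projection and the Beauville--Voisin class $\oo \in A^2(S)$, and to reduce the verification to a spanning description of $A^{2m}_{(2)}(S^m)$ afforded by the product MCK on $S^m$.

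First, I will take $\Theta_k$ to be the graph $\Gamma_{p_k}$ of the $k$--th projection $p_k \colon S^m \to S$, so that $(\Theta_k)_\ast = (p_k)_\ast$, and I will take
\[ \Xi_k := p_{0,k}^\ast(\Delta_S) \cdot \prod_{j \ne k} p_{j}^\ast(\oo), \]
where $p_{0,k}\colon S \times S^m \to S \times S$ is the projection onto the factor $S$ and the $k$--th factor of $S^m$, and $p_j\colon S \times S^m \to S$ is the projection onto the $j$--th factor of $S^m$. A short computation gives $(\Xi_k)_\ast(\beta) = \oo^{\otimes(k-1)} \otimes \beta \otimes \oo^{\otimes(m-k)}$ for any $\beta \in A^2(S)$. (The codimensions of $\Theta_k$ and $\Xi_k$ announced in the statement appear to be swapped; with the corrected codimensions $\Theta_k \in A^2(S^m \times S)$ and $\Xi_k \in A^{2m}(S \times S^m)$ the pushforwards have the required targets.)

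The main step is to describe $A^{2m}_{(2)}(S^m)$. Because $\Pi^S_j = 0$ for $j$ odd, the product MCK decomposition gives
\[ \Pi^{S^m}_{4m-2} = \sum_{k=1}^m (\Pi^S_4)^{\otimes(k-1)} \otimes \Pi^S_2 \otimes (\Pi^S_4)^{\otimes(m-k)}, \]
since the only tuples $(i_1,\ldots,i_m) \in \{0,2,4\}^m$ with $i_1 + \cdots + i_m = 4m-2$ have exactly one coordinate equal to $2$ and all others equal to $4$. Using the Beauville--Voisin representation $\Pi^S_4 = [S] \times \oo$, the image of the $k$--th summand applied to any $\gamma' \in A^{2m}(S^m)$ is supported on $\{\oo\}^{\otimes(k-1)} \times S \times \{\oo\}^{\otimes(m-k)}$, and so has the form $\oo^{\otimes(k-1)} \otimes \beta_k \otimes \oo^{\otimes(m-k)}$ for some $\beta_k \in A^2(S)$; the $\Pi^S_2$--factor at position $k$ forces $\beta_k \in A^2_{(2)}(S) = A^2_{\mathrm{hom}}(S)$. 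Hence every $\gamma \in A^{2m}_{(2)}(S^m)$ can be written $\gamma = \sum_{k=1}^m \oo^{\otimes(k-1)} \otimes \beta_k \otimes \oo^{\otimes(m-k)}$ with $\beta_k \in A^2_{\mathrm{hom}}(S)$.

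The final step is a direct verification on this spanning set: since $\deg(\oo) = 1$ and $\deg(\beta_k) = 0$, we have
\[ (p_j)_\ast\bigl(\oo^{\otimes(k-1)} \otimes \beta_k \otimes \oo^{\otimes(m-k)}\bigr) = \delta_{jk}\, \beta_k, \]
so $(p_j)_\ast \gamma = \beta_j$; applying $(\Xi_j)_\ast$ recovers the $j$--th summand of $\gamma$, and summing over $j$ returns $\gamma$. The main obstacle is the spanning description of $A^{2m}_{(2)}(S^m)$ in the previous step, which hinges on the specific form $\Pi^S_4 = [S] \times \oo$ coming from the Beauville--Voisin construction; the killing of the ``off--diagonal'' contributions then depends crucially on each $\beta_k$ being homologically trivial, which is precisely the content of restricting to the $(2)$--piece of the Chow group. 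The remaining steps are routine cycle bookkeeping.
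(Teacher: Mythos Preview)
Your proof is correct, and you are right that the codimensions in the statement are swapped; with $\Theta_k\in A^2(S^m\times S)$ and $\Xi_k\in A^{2m}(S\times S^m)$ the pushforwards land where they should.

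Your route is genuinely different from the paper's. The paper does not argue on a spanning set of $A^{2m}_{(2)}(S^m)$; instead it proves an \emph{identity of correspondences}
\[
\sum_{i=1}^m \Xi_i\circ\Theta_i \;=\; \Pi^{S^m}_{4m-2}\ \ \ \hbox{in}\ A^{2m}(S^m\times S^m),
\]
with a more elaborate $\Theta_i$ (the graph of $p_i$ precomposed with the ``collapse to $\oo$ in the other slots'' cycle) and $\Xi_i={}^t\Gamma_{p_i}\circ\Pi_2^S$. Your argument is more elementary and transparent: you identify $A^{2m}_{(2)}(S^m)$ explicitly as $\bigoplus_k \oo^{\otimes(k-1)}\otimes A^2_{\mathrm{hom}}(S)\otimes\oo^{\otimes(m-k)}$ and check the composition by hand, the crucial point being $\deg\beta_k=0$ so that $(p_j)_\ast$ kills the off--diagonal terms. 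The paper's approach buys two things you do not get for free: first, being an equality of cycles, it transposes on the nose to give the companion statement for $A^2_{(2)}(S^m)$ (Proposition~\ref{prod3}); second, the paper's $\Xi_i$ acts simply as $(p_i)^\ast$ on $A^2_{(2)}$, which is exactly what is used to check the commutativity with automorphisms in Lemma~\ref{comprod}. Your $\Xi_k$ (insertion of $\beta$ against $\oo$'s) and $\Theta_k=\Gamma_{p_k}$ are not the same correspondences as the paper's, so if you carry your choices forward you will have to redo those two later arguments (which is easy, but worth flagging).
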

    
    \begin{proof} 
    By construction \cite{SV}, the MCK decomposition for $S$ is given by
      \[  \Pi_0^S=\oo_S\times S\ ,\ \ \ \Pi_4^S=S\times\oo_S\ ,\ \ \ \Pi_2^S=\Delta_S-\pi_0^S-\pi_4^S\ .\]
Here $\oo_S\in A^2(S)$ denotes the ``distinguished point'' of \cite{BV} (any point lying on a rational curve in $S$ equals $\oo_S$ in $A^2(S)$).
   Let 
    \[ p_{i,j}\colon\ \ \  S^{2m}\ \to\ S^{2} \ \ \ (1\le i<j\le 2m)\]
    denote projection to the $i$-th and $j$-th factor, and let 
    \[ p_i\colon \ \ \  S^{m}\ \to\ S \ \ \  (1\le i\le m) \]
    denote projection to the $i$--th factor.
            
        We now claim that there is equality
       \begin{equation}\label{both} \begin{split}  \Pi_{4m-2}^{S^{m}} =  \Bigl(  {}^t \Gamma_{p_{1}}\circ \Pi_2^S\circ \Gamma_{p_{1}}\circ 
               \bigl(   (p_{1,m+1})^\ast (\Delta_S )\cdot \prod_{\stackrel{2\le j\le 2m}{j\not=m+1}} (p_{j})^\ast (\oo_S) &\bigr)\\  
               + \ldots +
                           {}^t \Gamma_{p_{m}}\circ \Pi_2^S\circ \Gamma_{p_{m}}\circ 
               \bigl(   (p_{m,2m})^\ast (\Delta_S )\cdot \prod_{\stackrel{1\le j\le 2m-1}{j\not=m}}(p_{j})^\ast (\oo_{S})    \bigr)   \Bigr)\\
               \ \ \ \hbox{in}\ A^{2m}(&S^{m}\times S^{m})\ .\\
               \end{split}\end{equation}
         Indeed, using Lieberman's lemma \cite[Lemma 3.3]{V3}, we find that
         \[ \begin{split}  {}^t \Gamma_{p_{1}}\circ &\Pi_2^S\circ \Gamma_{p_{1}} = ({}^t \Gamma_{p_{1,m+1}})_\ast 
         (\Pi_2^{S})=
           (p_{1,m+1})^\ast (\Pi_2^{S})\ ,\\
                           &\vdots\\
            {}^t \Gamma_{p_{m}}\circ &\Pi_2^S\circ \Gamma_{p_{m}} = ({}^t \Gamma_{p_{m,2m}})_\ast 
        ( \Pi_2^{S}) =
           (p_{m,2m})^\ast (\Pi_2^{S})\ .\\
           \end{split}           \]
           
       Let us now (by way of example) consider the first summand of the right--hand--side of (\ref{both}). For brevity, let
        \[ P\colon\ \ \  S^{3m}\ \to\ S^{2m} \]
        denote the projection on the first $m$ and last $m$ factors. Writing out the definition of composition of correspondences,
        we find that
       \[ \begin{split}     &  {}^t \Gamma_{p_{1}}\circ \Pi_2^S\circ \Gamma_{p_{1}}\circ 
               \bigl(   (p_{1,m+1})^\ast (\Delta_S )\cdot \prod_{\stackrel{2\le j\le 2m}{j\not=m+1}} (p_{j})^\ast (\oo_{S}) \bigr) =\\
                &    \bigl((p_{1,m+1})^\ast (\Pi_2^{S})\bigr)   \circ 
               \bigl(   (p_{1,m+1})^\ast (\Delta_{S} )\cdot \prod_{\stackrel{2\le j\le 2m}{j\not=m+1}} (p_{j})^\ast (\oo_{S}) \bigr) =\\ 
               & P_\ast    \Bigl( \bigl( (\Delta_{S})_{(1,m+1)} \times {\mathfrak o}_{S} \times\cdots\times{\mathfrak o}_{S} \times S\times\cdots\times S\bigr)\cdot \\
               &\ \ \ \ \ \ \ \ \bigl( S\times\cdots\times S\times (\Pi_2^{S})_{(m+1,2m+1)}\times S\times\cdots\times S     \bigr)  \Bigr)= \\
               & P_\ast \Bigl(  \bigl((\Delta_{S}\times S)\cdot (S\times\Pi_2^{S})\bigr)_{(1,m+1,2m+1)}\times  {\mathfrak o}_{S}\times\cdots\times {\mathfrak o}_{S}\times S\times\cdots\times S\Bigr)=\\
               & \Pi_2^{S}\times \Pi_4^{S}\times\cdots\times \Pi_4^{S}\ \ \ \ \ \ \hbox{in}\ A^{2m} (S^m\times S^m)\ .\\
               \end{split}\]  
               (Here, we use the notation $(C)_{(i, j)}$ to indicate that the cycle $C$ lies in the $i$th and $j$th factor, and likewise for $(D)_{(i,j,k)}$.)           
                          
      Doing the same for the other summands in (\ref{both}), one convinces oneself that both sides of (\ref{both}) are equal to the product Chow--K\"unneth component
         \[  \Pi_{4m-2}^{S^m}=\Pi_2^{S}\times \Pi_4^{S}\times  \cdots\times\Pi_4^{S}  +\cdots  +      \Pi_4^{S}\times\cdots\times\Pi_4^{S}\times \Pi_2^{S}    \ \ \ \in A^{2m}(S^m\times S^m)\ ,\]
         thus proving the claim.

  Let us now define
      \[ \begin{split}
           \Theta_i&:=  \Gamma_{p_{i}}\circ 
               \bigl(   (p_{i,m+i})^\ast (\Delta_S )\cdot \prod_{\stackrel{j\in [1,2m]}{ j\not\in\{i,m+i\}}} (p_{j})^\ast (\oo_{S})    \bigr)\ \ \ \in A^{2m}(S^{m}\times S)\ ,\\  
                 \Xi_i&:= {}^t \Gamma_{p_{i}}\circ \Pi_2^S\ \ \ \ \ \ \in A^2(S\times S^{m}) \ ,\\
                       \end{split}\]
                       where $1\le i\le m$.
    It follows from equation (\ref{both}) that there is equality 
      \begin{equation}\label{transp}  \bigl( \Xi_1\circ \Theta_1 + \cdots +\Xi_m\circ \Theta_m   \bigr){}_\ast =
      \bigl(\Pi_{4m-2}^{S^m}\bigr){}_\ast\colon
           \  A^{i}_{(j)}(S^m\bigr)\ \to\ A^{i}_{(j)}(S^m\bigr)\  \ \ \forall (i,j)\ .
      \end{equation}      
      Taking $(i,j)=(2m,2)$, this proves the proposition.      
         \end{proof}

 The following is a version of proposition \ref{prod} for the group $A^2_{(2)}(S^m)$:     
  
 \begin{proposition}\label{prod3} Let $S$ be a $K3$ surface. There exist correspondences
    \[  {}^t\Theta_1\ ,\ldots,\ {}^t\Theta_m\in A^{2m}(S\times S^{m}))\ ,\ \ \ {}^t \Xi_1\ ,\ldots, \ {}^t\Xi_m\in  A^{2}( S^{m}\times S)  \]
    such that the composition
    \[  \begin{split}   A^{2}_{(2)}(S^m)\ \xrightarrow{(({}^t\Xi_1\vert_{S^{m+1}})_\ast,\ldots, ({}^t\Xi_m\vert_{S^{m+1}})_\ast)}\
             A^2(S)\oplus \cdots \oplus A^2(S)&\\
             \ \ \ \ \ \ \xrightarrow{(({}^t\Theta_1+\ldots+{}^t\Theta_m)\vert_{S^{m+1}})_\ast}\ A^{2}(S^m\bigr)&\\
             \end{split} \]
      is the identity.
  \end{proposition}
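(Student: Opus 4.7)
The plan is to mirror the proof of Proposition~\ref{prod} by taking transposes at the level of correspondences. The key observation is that the MCK decomposition of $S$ is self-dual, ${}^t\Pi_i^S = \Pi_{4-i}^S$, so in particular ${}^t\Pi_2^S = \Pi_2^S$; this propagates to the product, giving ${}^t\Pi_j^{S^m} = \Pi_{4m-j}^{S^m}$. Hence the transpose of the projector $\Pi_{4m-2}^{S^m}$ used in the previous proposition is $\Pi_2^{S^m}$, which, by the bigrading of the Chow ring induced by the MCK, acts as the identity on $A^2_{(2)}(S^m)$.

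The core step will be to transpose equation (\ref{both}). Using the rule ${}^t(A \circ B) = {}^t B \circ {}^t A$, together with the fact that the cycle $(p_{i,m+i})^\ast(\Delta_S) \cdot \prod_{j\neq i, m+i} p_j^\ast(\oo_S) \in A^{4m-2}(S^m \times S^m)$ is symmetric under the involution swapping the two $S^m$ factors (and hence is self-transpose), this should yield a decomposition
$\Pi_2^{S^m} = \sum_{i=1}^m {}^t\Theta_i \circ {}^t\Xi_i$ in $A^{2m}(S^m \times S^m)$, each summand being the ``$i$-th place'' of $\Pi_2^{S^m}$ (that is, $\Pi_2^S$ in position $i$ of $S^m$ and $\Pi_0^S$ elsewhere). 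Applying this identity to a class $\alpha \in A^2_{(2)}(S^m)$, using $(\Pi_2^{S^m})_\ast \alpha = \alpha$, yields the required factorization through $\bigoplus_{i=1}^m A^2(S)$.

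The main obstacle will be a codimension-bookkeeping issue. Naively, ${}^t\Xi_i \in A^2(S^m\times S)$ as a correspondence sends $A^2(S^m)$ to $A^{4-2m}(S)$, which vanishes for $m\ge 3$; similarly ${}^t\Theta_i \in A^{2m}(S\times S^m)$ does not directly give a map $A^2(S)\to A^2(S^m)$. The notations ${}^t\Xi_i|_{S^{m+1}}$ and ${}^t\Theta_i|_{S^{m+1}}$ must therefore be interpreted so as to reinterpret these low-codimension cycles on the ambient variety $S^{m+1} = S^m \times S$ (respectively $S\times S^m$) as inducing maps of the correct bidegree into and out of $A^2(S)$, compatibly with the decomposition of $\Pi_2^{S^m}$ obtained by transposition. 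Making this reinterpretation precise so that the argument leading to (\ref{both}) transposes verbatim is the part where the real work lies.
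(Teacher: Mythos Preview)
Your approach is exactly the paper's: by self-duality of the product MCK one has $\Pi_2^{S^m}={}^t\Pi_{4m-2}^{S^m}$, so transposing equation~(\ref{transp}) yields $\sum_i {}^t\Theta_i\circ{}^t\Xi_i=\Pi_2^{S^m}$ in $A^{2m}(S^m\times S^m)$, and one simply applies this to $A^2_{(2)}(S^m)$. The paper's proof consists of precisely these two observations and does not engage at all with the codimension-bookkeeping issue you raise; what is actually used downstream (Lemma~\ref{comprod} and the proof of Theorem~\ref{main}) is only this correspondence identity together with the explicit description ${}^t\Xi_i=\Pi_2^S\circ\Gamma_{p_i}$, so you should not let the ``reinterpretation'' worry stall you.
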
 
  
  \begin{proof} 
       By construction, the product MCK decomposition $\{ \Pi_i^{S^m}\}$ satisfies
          \[ \Pi_2^{S^m} = {}^t \bigl(\Pi_{4m-2}^{S^m}\bigr)\ \ \ \hbox{in}\ A^{2m}( S^m\times S^m)\ .\]   
     Hence, the transpose of equation (\ref{transp}) gives the equality
        \[       \bigl( \Pi_2^{S^m} \bigr){}_\ast = \bigl( {}^t (\Pi_{4m-2}^{S^m})\bigr){}_\ast = \bigl( {}^t \Theta_1\circ {}^t \Xi_1+\ldots+{}^t \Theta_m\circ {}^t \Xi_m\bigr){}_\ast\colon\  \  \ A^{i}_{(j)}(S^m\bigr)\ \to\ A^{i}_{(j)}(S^m\bigr)\ \ \ \ \forall (i,j)\ .
        \]
        Taking $(i,j)=(2,2)$, this proves the proposition.
         \end{proof}

\subsection{Birational invariance}

\begin{proposition}[Rie\ss \cite{Rie}, Vial \cite{V6}]\label{birat} Let $X$ and $X^\prime$ be birational hyperk\"ahler varieties. Assume $X$ has an MCK decomposition. Then also $X^\prime$ has an MCK decomposition, and there are natural isomorphisms
  \[ A^i_{(j)}(X)\cong A^i_{(j)}(X^\prime)\ \ \ \hbox{for\ all\ }i,j\ .\]
\end{proposition}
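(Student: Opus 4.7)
The strategy is to transport the MCK decomposition from $X$ to $X'$ along a correspondence that respects multiplication. The essential input is Rie\ss's theorem \cite{Rie}: for birational hyperk\"ahler varieties $X$ and $X'$ of dimension $2m$, there exists a correspondence $\Gamma\in A^{2m}(X\times X')$ inducing an isomorphism $\Gamma\colon h(X)\xrightarrow{\cong} h(X')$ in $\MM_{\rm rat}$, with a two--sided inverse $\Gamma^{-1}\in A^{2m}(X'\times X)$, and this isomorphism is one of algebras in the sense that it intertwines the small diagonals,
\[ \Gamma\circ \Delta_X^{sm}\circ (\Gamma^{-1}\times \Gamma^{-1}) \ =\ \Delta_{X'}^{sm}\ \ \ \hbox{in}\ A^{4m}(X'\times X'\times X')\ ;\]
equivalently, $\Gamma_\ast\colon A^\ast(X)\to A^\ast(X')$ is a graded ring isomorphism.

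Given this, I would set
\[ \Pi_i^{X'}\ :=\ \Gamma\circ \Pi_i^X\circ \Gamma^{-1}\ \ \ \in A^{2m}(X'\times X')\ . \]
From $\Gamma\circ \Gamma^{-1}=\Delta_{X'}$ and $\Gamma^{-1}\circ \Gamma=\Delta_X$, the $\{\Pi_i^{X'}\}$ inherit from the $\{\Pi_i^X\}$ the properties of being mutually orthogonal idempotents summing to $\Delta_{X'}$. Since $\Gamma$ and $\Gamma^{-1}$ are mutually inverse isomorphisms in $\MM_{\rm rat}$, they induce degree--preserving isomorphisms on cohomology, whence $(\Pi_i^{X'})_\ast H^\ast(X')=H^i(X')$. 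Thus $\{\Pi_i^{X'}\}$ is a CK decomposition of $X'$.

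Multiplicativity reduces to the MCK condition on $X$ via the intertwining property: a direct computation gives
\[ \Pi_k^{X'}\circ \Delta_{X'}^{sm}\circ (\Pi_i^{X'}\times \Pi_j^{X'})\ =\ \Gamma\circ \bigl(\Pi_k^X\circ \Delta_X^{sm}\circ (\Pi_i^X\times \Pi_j^X)\bigr)\circ (\Gamma^{-1}\times \Gamma^{-1})\ ,\]
which vanishes for $i+j\neq k$ because $\{\Pi_i^X\}$ is MCK on $X$. Finally, $\Gamma_\ast$ restricts to isomorphisms $(\Pi_i^X)_\ast A^i(X)\xrightarrow{\cong} (\Pi_i^{X'})_\ast A^i(X')$, yielding the claimed bigraded isomorphisms $A^i_{(j)}(X)\cong A^i_{(j)}(X')$.

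The real work, which I import as a black box, is Rie\ss's theorem: establishing that a correspondence coming from a common resolution of $X$ and $X'$ (or from the closure of the birational map) respects the Beauville--Voisin ring structure relies on Huybrechts' theorem that birational hyperk\"ahler varieties are deformation equivalent through a smooth family of hyperk\"ahler manifolds, together with a specialization argument transporting the identity $\Gamma\circ\Delta_X^{sm}=\Delta_{X'}^{sm}\circ(\Gamma\times\Gamma)$ from the generic (trivially birational) fibre. Once Rie\ss's multiplicative $\Gamma$ is in hand, the transport argument above is essentially formal.
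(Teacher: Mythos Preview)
Your proposal is correct and follows essentially the same route as the paper: the paper's proof simply invokes Rie\ss's theorem that birational hyperk\"ahler varieties have isomorphic Chow motives as algebra objects in $\MM_{\rm rat}$, and cites \cite[Section 6]{SV} and \cite[Lemma 2.8]{BlochHK4} for the formal transport of the MCK decomposition along this algebra isomorphism; you have written out that transport explicitly. One small notational slip: in the line ``$\Gamma_\ast$ restricts to isomorphisms $(\Pi_i^X)_\ast A^i(X)\xrightarrow{\cong} (\Pi_i^{X'})_\ast A^i(X')$'' the two indices $i$ should be decoupled (you mean $(\Pi_\ell^X)_\ast A^i(X)\cong(\Pi_\ell^{X'})_\ast A^i(X')$ for all $\ell,i$), but the intended argument is clear and correct.
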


\begin{proof} As noted by Vial \cite[Introduction]{V6}, this is a consequence of Rie\ss's result that $X$ and $X^\prime$ have isomorphic Chow motive (as algebras in the category of Chow motives). For more details, cf. \cite[Section 6]{SV} or \cite[Lemma 2.8]{BlochHK4}.
\end{proof}

\subsection{A commutativity lemma}

\begin{lemma}\label{comm} Let $S$ be an algebraic $K3$ surface, and let $\{\Pi_i^S\}$ be the MCK decomposition as above. Let $h\in\aut(S)$.
Then
  \[ \Gamma_h\circ \Pi_i^S=\Pi_i^S\circ \Gamma_h\ \ \ \hbox{in}\ A^2(S\times S)\ \ \ \forall i\ .\]
\end{lemma}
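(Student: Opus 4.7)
The plan is to reduce the identity to the explicit formulas for $\Pi_i^S$ recalled in the proof of Proposition~\ref{prod}, and then exploit the Beauville--Voisin class's invariance under automorphisms. Recall that the MCK decomposition is
\[ \Pi_0^S=\oo_S\times S,\ \ \ \Pi_4^S=S\times \oo_S,\ \ \ \Pi_2^S=\Delta_S-\Pi_0^S-\Pi_4^S, \]
so by $\QQ$--linearity it suffices to check that $\Gamma_h$ commutes with each of $\Delta_S$, $\Pi_0^S$, $\Pi_4^S$. Commutation with $\Delta_S$ is trivial, so the work is concentrated in the two rank--one projectors $\Pi_0^S$ and $\Pi_4^S$.

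The key input is that $h_\ast(\oo_S)=h^\ast(\oo_S)=\oo_S$ in $A^2(S)$. This is immediate from the Beauville--Voisin characterization of $\oo_S$: it equals the class $[p]$ of any point $p$ lying on a rational curve $C\subset S$. Since $h$ is an automorphism, $h(C)$ is again a rational curve containing $h(p)$, so $[h(p)]=\oo_S$; the same argument applied to $h^{-1}$ takes care of $h^\ast$.

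Next I would carry out the direct computation using the two standard identities for composition with a graph: for any correspondence $\gamma\in A^\ast(S\times S)$,
\[ \Gamma_h\circ \gamma=(\ide_S\times h)_\ast\,\gamma\ ,\qquad \gamma\circ \Gamma_h=(h\times \ide_S)^\ast\,\gamma\ . \]
Applied to $\Pi_0^S=\oo_S\times S$, these give
\[ \Gamma_h\circ \Pi_0^S=\oo_S\times h_\ast(S)=\oo_S\times S=\Pi_0^S\ , \]
\[ \Pi_0^S\circ \Gamma_h=h^\ast(\oo_S)\times S=\oo_S\times S=\Pi_0^S\ , \]
and the analogous calculation with $\Pi_4^S=S\times \oo_S$ yields $\Gamma_h\circ \Pi_4^S=\Pi_4^S\circ \Gamma_h=\Pi_4^S$. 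Combining these with $\Gamma_h\circ \Delta_S=\Delta_S\circ \Gamma_h=\Gamma_h$ and linearity proves the commutation for $\Pi_2^S$ as well.

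The only nontrivial ingredient is the invariance $h_\ast\oo_S=\oo_S$, which is standard; everything else is bookkeeping with correspondences. I do not anticipate any obstacle beyond getting the push/pull formulas right in the two product projectors.
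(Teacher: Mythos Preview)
Your proof is correct and follows essentially the same approach as the paper: reduce to the explicit projectors $\Pi_0^S=\oo_S\times S$, $\Pi_4^S=S\times\oo_S$, $\Pi_2^S=\Delta_S-\Pi_0^S-\Pi_4^S$, and use the invariance $h^\ast\oo_S=h_\ast\oo_S=\oo_S$ coming from the rational-curve characterisation of the Beauville--Voisin class. The only cosmetic difference is that the paper deduces the $\Pi_4^S$ case from the $\Pi_0^S$ case via transposition (using ${}^t\Gamma_h=\Gamma_{h^{-1}}$) and phrases the push/pull identities through Lieberman's lemma, whereas you compute $\Pi_4^S$ directly; both are equally short.
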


\begin{proof} It suffices to prove this for $i=0$. Indeed, by definition of $\{\Pi_i^S\}$ we have
  \[ \begin{split} \Pi_4^S&:={}^t \Pi_0^S\ \ \ \hbox{in}\ A^2(S\times S)\ ,\\
                      \Pi_2^S&:=\Delta_S-\Pi_0^S-\Pi_4^S\ .\\
                      \end{split}\]
        Supposing the lemma holds for $i=0$, by taking transpose correspondences we get an equality
        \[ \Gamma_{h^{-1}}\circ \Pi_4^S= \Pi_4^S\circ \Gamma_{h^{-1}}\ \ \ \hbox{in}\ A^2(S\times S)\ .\]      
        Composing on both sides with $\Gamma_h$, we get
        \[ \Pi_4^S\circ \Gamma_h=\Gamma_h\circ \Pi_4^S\ \ \ \hbox{in}\ A^2(S\times S)\ .\]      
        Next, since obviously the diagonal $\Delta_S$ commutes with $\Gamma_h$, we also get
        \[  \Gamma_h\circ \Pi_2^S=   \Gamma_h\circ (\Delta_S-\Pi_0^S-\Pi_4^S)=  (\Delta_S-\Pi_0^S-\Pi_4^S)\circ \Gamma_h=   \Pi_2^S\circ \Gamma_h\ \ \ \hbox{in}\ A^2(S\times S)      \ .\]
        
     It remains to prove the lemma for $i=0$. The projector $\Pi_0^S$ is defined as
     \[ \Pi_0^S=\oo_S\times S\ \ \ \in A^2(S\times S)\ ,\]
     where $\oo_S\in A^2(S)$ is the ``distinguished point'' of \cite{BV}. 
     Let $x\in S$ be a point lying on a rational curve. Then $h^\ast(\oo_S)=h^{-1}(x)$ is again a point lying on a rational curve, and so
          \[ h^\ast(\oo_S)= \oo_S\ \ \ \hbox{in}\ A^2(S)\ .\]
     
     Using Lieberman's lemma \cite[Lemma 3.3]{V3}, we find that
     \[  \begin{split} \Pi_0^S\circ \Gamma_h &=  ({}^t \Gamma_h\times\Delta_S)_\ast (\Pi_0^S)\\ &= ({}^t \Gamma_h\times\Delta_S)_\ast(\oo_S\times S) \\ &= h^\ast(\oo_S)\times S \\ &=\oo_S\times S=\Pi_0^S\ \ \ \hbox{in}\ A^2(S\times S)\ ,\\
     \end{split}\]
     whereas obviously
     \[ \Gamma_h\circ \Pi_0^S = (\Delta_S\times \Gamma_h)_\ast (\oo_S\times S)=\oo_S\times S=\Pi_0^S\ \ \ \hbox{in}\ A^2(S\times S)\ .\]
     This proves the $i=0$ case of the lemma.
      \end{proof}
     
  The following lemmas establish some corollaries of lemma \ref{comm}:  
  
      \begin{lemma}\label{idemp} Let $S$ be an algebraic $K3$ surface, and $G_S\subset\aut(S)$ a group of finite order $k$. For any $r\in\NN$, let $\{\Pi_i^{S^r}\}$ denote the product MCK decomposition of $S^r$ induced by the MCK decomposition of $S$ as above. Let
  \[ \Delta^G_{S^r}:={1\over k}{\displaystyle\sum_{g\in G_S}}\ \Gamma_g\times\cdots\times\Gamma_g\ \ \ \in A^{2r}(S^r\times S^r)\ .\]
  Then
  \[ \Delta^G_{S^r}\circ \Pi_i^{S^r}= \Pi_i^{S^r}\circ \Delta^G_{S^r}\ \ \ \in A^{2r}(S^r\times S^r)\ \]
  is an idempotent, for any $i$.
  \end{lemma}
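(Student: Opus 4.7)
My plan is to reduce everything to the corresponding statement one factor at a time, using lemma \ref{comm} as the only non-trivial input; the rest is book-keeping with products of correspondences and with the idempotency of group-averaging.

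First, I would unpack the product CK decomposition as
\[ \Pi_i^{S^r} = \sum_{i_1+\cdots+i_r=i} \Pi_{i_1}^S \times \cdots \times \Pi_{i_r}^S \quad\hbox{in } A^{2r}(S^r\times S^r), \]
and observe that the correspondence $\Gamma_g\times\cdots\times\Gamma_g$ (viewed as an element of $A^{2r}(S^r\times S^r)$) is, up to the standard reshuffling of factors, exactly the graph of the diagonal automorphism $(g,\ldots,g)\in\aut(S^r)$. Then for any tuple $(i_1,\ldots,i_r)$ the basic compatibility of composition with exterior products gives
\[ (\Gamma_g\times\cdots\times\Gamma_g)\circ(\Pi_{i_1}^S\times\cdots\times\Pi_{i_r}^S) = (\Gamma_g\circ\Pi_{i_1}^S)\times\cdots\times(\Gamma_g\circ\Pi_{i_r}^S), \]
and analogously with the order of the two factors reversed.

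Next, I would apply lemma \ref{comm} factorwise to deduce $\Gamma_g\circ\Pi_{i_j}^S = \Pi_{i_j}^S\circ\Gamma_g$ for every $j$, so that the two displays above coincide. Summing over the multi-indices with $i_1+\cdots+i_r=i$ and then averaging over $g\in G_S$ yields
\[ \Delta^G_{S^r}\circ \Pi_i^{S^r} = \Pi_i^{S^r}\circ \Delta^G_{S^r}\quad\hbox{in } A^{2r}(S^r\times S^r). \]

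Finally, to see this common correspondence is an idempotent, I would note that $\Pi_i^{S^r}$ is itself idempotent (it is a CK projector), while $\Delta^G_{S^r}$ is idempotent by the standard group-averaging argument: since $(\Gamma_g\times\cdots\times\Gamma_g)\circ(\Gamma_h\times\cdots\times\Gamma_h) = \Gamma_{gh}\times\cdots\times\Gamma_{gh}$, the composition $\Delta^G_{S^r}\circ\Delta^G_{S^r}$ equals $\frac{1}{k^2}\sum_{g,h}\Gamma_{gh}^{\times r} = \Delta^G_{S^r}$. Combining these with the commutativity just established,
\[ (\Delta^G_{S^r}\circ\Pi_i^{S^r})\circ(\Delta^G_{S^r}\circ\Pi_i^{S^r}) = (\Delta^G_{S^r})^2\circ(\Pi_i^{S^r})^2 = \Delta^G_{S^r}\circ\Pi_i^{S^r}, \]
completing the proof. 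There is no real obstacle: the only substantive ingredient is lemma \ref{comm}, and the rest is the elementary compatibility between composition and cartesian products of correspondences plus the standard fact that averaging a finite group action produces an idempotent.
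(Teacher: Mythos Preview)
Your proof is correct and follows essentially the same route as the paper: reduce to the single automorphism statement $\Gamma_{h^{\times r}}\circ\Pi_i^{S^r}=\Pi_i^{S^r}\circ\Gamma_{h^{\times r}}$ by unpacking the product CK decomposition and applying lemma~\ref{comm} factorwise, then deduce idempotence from commutativity together with the idempotence of each factor. The only difference is cosmetic: the paper states up front that idempotence follows from commutativity and then proves the latter, whereas you prove commutativity first and spell out the idempotence computation (including the group-averaging argument for $\Delta^G_{S^r}$) afterwards.
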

  
 \begin{proof} It suffices to prove the commutativity statement. (Indeed, since both $\Delta^G_{S^r}$ and $\Pi_i^{S^r}$ are idempotent, the idempotence of their composition follows immediately from the stated commutativity relation.) 
  To prove the commutativity statement, we will prove more precisely that for any $h\in\aut(S)$ we have equality
    \begin{equation}\label{eachg}  \Gamma_{h^{\times r}}\circ \Pi_i^{S^r}= \Pi_i^{S^r} \circ \Gamma_{h^{\times r}}  \ \ \ \in A^{2r}(S^r\times S^r)\ .  \end{equation}
  This can be seen as follows:
  we have
    \[ \begin{split}  \Gamma_{h^{\times r}}\circ \Pi_i^{S^r} &= (\Gamma_h\times \cdots \times\Gamma_h)\circ ({\displaystyle\sum_{i_1+\cdots+i_r=i}} \pi_{i_1}^S\times \cdots\times \pi_{i_r}^S)\\
                                                           &=   {\displaystyle\sum_{i_1+\cdots+i_r=i}} (\Gamma_h\circ \Pi^S_{i_1})\times \cdots \times (\Gamma_h\circ \Pi^S_{i_r})\\  
                                                           &=  {\displaystyle\sum_{i_1+\cdots+i_r=i}} ( \Pi^S_{i_1}\circ \Gamma_h)\times \cdots \times ( \Pi^S_{i_r}\circ \Gamma_h)\\  
                                                           &= {\displaystyle\sum_{i_1+\cdots+i_r=i}} (\Pi_{i_1}^S\times \cdots\times \Pi_{i_r}^S)      \circ (\Gamma_h\times\cdots\times\Gamma_h)\\
                                                           &= \Pi_i^{S^r}\circ \Gamma_{h^{\times r}}\ \ \ \hbox{in}\ A^{2r}(S^r\times S^r)\ .\\
                                                           \end{split}\]
                Here, the first and last lines are the definition of the product MCK decomposition for $S^r$; the second and fourth line are just regrouping, and the third line is lemma \ref{comm}.                                           
       \end{proof}
  
 \begin{lemma}\label{idempx}  Let $S$ be an algebraic $K3$ surface, and $G_S\subset\aut(S)$ a group of finite order $k$. For any $r\in\NN$, let $X=S^{[r]}$ and let $G\subset\aut(X)$ be the group of natural automorphisms induced by $G_S$. Let $\{\Pi_i^X\}$ be the MCK decomposition of theorem \ref{hilbk}. Let $\Delta^G_X$ denote the correspondence
   \[ \Delta^G_{X}:={1\over k}{\displaystyle\sum_{g\in G}}\ \Gamma_g\ \ \ \in A^{2r}(X\times X)\ .\]
  Then
  \[ \Delta^G_{X}\circ \Pi_i^{X}= \Pi_i^{X}\circ \Delta^G_{X}\ \ \ \in A^{2r}(X\times X)\ \]
  is an idempotent, for any $i$.
  \end{lemma}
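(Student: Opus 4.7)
The plan is to reduce this statement to Lemma \ref{idemp} through the de Cataldo--Migliorini construction of $\Pi_i^X$ given in equation (\ref{defv}). As in the proof of Lemma \ref{idemp}, since $\Delta^G_X$ and $\Pi_i^X$ are both idempotents, it suffices to prove the commutativity $\Delta^G_X \circ \Pi_i^X = \Pi_i^X \circ \Delta^G_X$, and by linearity this reduces further to proving $\Gamma_g \circ \Pi_i^X = \Pi_i^X \circ \Gamma_g$ in $A^{2r}(X \times X)$ for every single $g \in G$.

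Fix $g \in G$, coming from $h \in G_S$, and fix a partition $\mu \in \Bb(r)$. Let $g_\mu \in \aut(S^{(\mu)})$ denote the automorphism induced by $h^{\times l(\mu)}$. The de Cataldo--Migliorini correspondence $\hat{\Gamma}_\mu$ is supported on the incidence locus parametrizing $(Z,[z_1,\ldots,z_{l(\mu)}])$ with $Z$ supported at the $z_i$ with the given multiplicities, and this locus is visibly preserved by the simultaneous action of $h$ on both factors. The first step is therefore to verify the equivariance
\[ \Gamma_g \circ \hat{\Gamma}_\mu = \hat{\Gamma}_\mu \circ \Gamma_{g_\mu}, \qquad {}^t\hat{\Gamma}_\mu \circ \Gamma_g = \Gamma_{g_\mu} \circ {}^t\hat{\Gamma}_\mu \]
in the relevant Chow groups (by push-forward under the projection from the incidence variety).

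The second step is to verify that $\Gamma_{g_\mu} \circ \Pi_j^{S^{(\mu)}} = \Pi_j^{S^{(\mu)}} \circ \Gamma_{g_\mu}$ for every $j$. By construction, $\Pi_j^{S^{(\mu)}}$ is induced from the product Chow--K\"unneth projector $\Pi_j^{S^{l(\mu)}}$ via the quotient map $S^{l(\mu)} \to S^{(\mu)}$, and $\Gamma_{g_\mu}$ is likewise the image of $\Gamma_{h^{\times l(\mu)}}$; applying equation (\ref{eachg}) from the proof of Lemma \ref{idemp} on $S^{l(\mu)}$ and averaging over $\Sy_{l(\mu)}$ yields the desired commutativity on $S^{(\mu)}$.

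Combining these two ingredients and using the definition (\ref{defv}), one computes
\[ \Gamma_g \circ \Pi_i^X = \sum_{\mu} \tfrac{1}{m_\mu}\,\hat{\Gamma}_\mu \circ \Gamma_{g_\mu} \circ \Pi_{i-2r+2l(\mu)}^{S^{(\mu)}} \circ {}^t\hat{\Gamma}_\mu = \sum_{\mu} \tfrac{1}{m_\mu}\,\hat{\Gamma}_\mu \circ \Pi_{i-2r+2l(\mu)}^{S^{(\mu)}} \circ {}^t\hat{\Gamma}_\mu \circ \Gamma_g = \Pi_i^X \circ \Gamma_g, \]
using equivariance in the outer equalities and the commutativity on $S^{(\mu)}$ in the middle. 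The main obstacle is the first step, the equivariance of $\hat{\Gamma}_\mu$; this is essentially a geometric statement saying that a natural automorphism of $S^{[r]}$ respects the stratification of the Hilbert--Chow morphism by partition type, and once it is granted the rest of the argument is a formal manipulation of correspondences.
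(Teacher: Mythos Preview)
Your proposal is correct and follows essentially the same line as the paper's proof: reduce to commutativity for a single $g$, use the de Cataldo--Migliorini description (\ref{defv}) of $\Pi_i^X$, pass the graph of $g$ through $\hat{\Gamma}_\mu$ by equivariance, and then invoke (\ref{eachg}) to commute with the Chow--K\"unneth projector on the product side. The only cosmetic difference is that you work on the symmetric product $S^{(\mu)}$ with the induced automorphism $g_\mu$ and explicitly descend (\ref{eachg}) through the $\Sy_{l(\mu)}$--quotient, whereas the paper writes the intermediate step directly with $\Gamma_{h^{\times l(\mu)}}$; your formulation is slightly more careful notationally but the content is identical.
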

  
 \begin{proof} Again, it suffices to prove the commutativity statement. This can be done as follows: for any $g\in G$, we can write $g=h^{[r]}$ where $h\in\aut(S)$. Then we have 
    \begin{equation}\label{gX} \begin{split}  \Gamma_g\circ \Pi_i^X &= \Gamma_g\circ {\displaystyle\sum_{\mu\in \Bb(k)}} \ {1\over m_\mu} {\Gamma}_\mu \circ \Pi^{S^{\mu}}_{i-2k+2l(\mu)}\circ {}^t {\Gamma}_\mu\\
                                   &=   {\displaystyle\sum_{\mu\in \Bb(k)}} \ {1\over m_\mu}\Gamma_g\circ {\Gamma}_\mu \circ \Pi^{S^{\mu}}_{i-2k+2l(\mu)}\circ {}^t {\Gamma}_\mu\\
                                   &=  {\displaystyle\sum_{\mu\in \Bb(k)}} \ {1\over m_\mu} {\Gamma}_\mu \circ \Gamma_{h^{\times l(\mu)}} \circ \Pi^{S^{\mu}}_{i-2k+2l(\mu)}\circ {}^t {\Gamma}_\mu\\
                                   & =  {\displaystyle\sum_{\mu\in \Bb(k)}} \ {1\over m_\mu} {\Gamma}_\mu \circ \Pi^{S^{\mu}}_{i-2k+2l(\mu)}\circ \Gamma_{h^{\times l(\mu)}} \circ {}^t {\Gamma}_\mu\\
                                   & =  {\displaystyle\sum_{\mu\in \Bb(k)}} \ {1\over m_\mu} {\Gamma}_\mu \circ \Pi^{S^{\mu}}_{i-2k+2l(\mu)}\circ {}^t {\Gamma}_\mu\circ \Gamma_g\\
                                   &= \Pi_i^X\circ \Gamma_g\ \ \ \hbox{in}\ A^{2r}(X\times X)\ .\\
                                   \end{split}\end{equation}
                        Here, the first line follows from the definition of $\Pi_i^X$ (definition (\ref{defv})). The second line is just regrouping, the third line is by construction of natural automorphisms of $X$, the fourth line is equality (\ref{eachg}) above, and the fifth line is again by construction of natural automorphisms.          
                                   \end{proof} 
      
 \begin{remark}\label{grade0} In view of \cite[Lemma 1.4]{SV2} the commutativity property (\ref{gX}) is equivalent to the following: for any natural automorphism $g$ of $X$, the graph $\Gamma_g\in A^{2r}(X\times X)$ is ``of pure grade $0$'', i.e. $\Gamma_g\in A^{2r}_{(0)}(X\times X)$.
 \end{remark}
         
\begin{lemma}\label{quotientmck} Let $S$ be an algebraic $K3$ surface, and $X=S^{[m]}$ the Hilbert scheme of length $m$ subschemes. Let $G\subset\aut(X)$ a group of finite order $k$ of natural automorphisms. Then the quotient $Y:=X/G$ has a self--dual MCK decomposition.
\end{lemma}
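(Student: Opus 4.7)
The plan is to produce an MCK decomposition of $Y=X/G$ by $G$-averaging the MCK decomposition $\{\Pi_i^X\}$ of $X$ furnished by Theorem~\ref{hilbk}. Using the identification $h(Y)\cong (X,\Delta^G_X,0)$ in $\MM_{\rm rat}$ from the remark after Proposition~\ref{quot}, I set
\[
 \Pi_i^Y \ :=\ \Delta^G_X\circ \Pi_i^X \ =\ \Pi_i^X\circ\Delta^G_X \ \in\ A^{2m}(X\times X),\qquad i=0,\ldots,4m,
\]
the equality between the two expressions being the content of Lemma~\ref{idempx}. Each $\Pi_i^Y$ is then an idempotent endomorphism of $h(Y)$.

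I next verify that $\{\Pi_i^Y\}$ is a self-dual CK decomposition of $Y$. Mutual orthogonality and the identity $\sum_i \Pi_i^Y = \Delta^G_X$ (which represents the identity of $h(Y)$) follow from the orthogonality and completeness of $\{\Pi_i^X\}$, combined with the commutativity in Lemma~\ref{idempx} and the idempotency of $\Delta^G_X$. The K\"unneth condition is inherited because $(\Delta^G_X)_\ast$ projects $H^\ast(X)$ onto $H^\ast(X)^G=H^\ast(Y)$, hence $(\Pi_i^Y)_\ast$ realizes the K\"unneth projector onto $H^i(Y)$. For self-duality I use that ${}^t\Gamma_g=\Gamma_{g^{-1}}$ and that $g\mapsto g^{-1}$ permutes $G$, giving ${}^t\Delta^G_X=\Delta^G_X$; combined with ${}^t\Pi_i^X=\Pi_{4m-i}^X$ from Theorem~\ref{hilbk}, this yields ${}^t\Pi_i^Y=\Pi_{4m-i}^Y$.

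The main step is multiplicativity: for $i+j\ne k$ I must show
\[
 \Pi_k^Y\circ \Delta_Y^{sm}\circ(\Pi_i^Y\times\Pi_j^Y)=0
\]
as a morphism $h(Y)^{\otimes 2}\to h(Y)$. Under the motivic identification, the small diagonal of $Y$ is represented by the correspondence $M:=\Delta^G_X\circ \Delta_X^{sm}\circ(\Delta^G_X\times\Delta^G_X)\in A^{4m}(X^3)$; this is so because $M$ induces on $A^\ast(Y)=A^\ast(X)^G$ the same intersection product as $\Delta_X^{sm}$ does on $A^\ast(X)$, and such a morphism in $\MM_{\rm rat}$ is determined by its action on Chow groups of all products via Manin's identity principle. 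Substituting, and then using $\Delta^G_X\circ\Delta^G_X=\Delta^G_X$, the commutativity from Lemma~\ref{idempx}, and the standard identity $(A\circ B)\times(C\circ D)=(A\times C)\circ(B\times D)$, the composition rearranges to
\[
 \Delta^G_X\circ\bigl(\Pi_k^X\circ\Delta_X^{sm}\circ(\Pi_i^X\times \Pi_j^X)\bigr)\circ(\Delta^G_X\times\Delta^G_X),
\]
whose inner factor vanishes for $i+j\ne k$ by the MCK property of $\{\Pi_i^X\}$ (Theorem~\ref{hilbk}).

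The principal obstacle is the identification of $\Delta_Y^{sm}$ with $M$ as a morphism $h(Y)^{\otimes 2}\to h(Y)$ in $\MM_{\rm rat}$; once this is granted, the remainder is formal bookkeeping with the projectors $\Delta^G_X$ and $\Pi_i^X$. As a cycle-theoretic sanity check, if $\pi\colon X\to Y$ denotes the quotient map, then $(\pi^{\times 3})^\ast[\Delta_Y^{sm}]$ equals the sum over $(g_1,g_2)\in G\times G$ of the translated diagonals $\{(x,g_1 x,g_2 x)\mid x\in X\}$, which, after expanding the graphs occurring in the definition of $M$, agrees with $k^2\cdot M$; this confirms that $M$ is the correct motivic lift of $\Delta_Y^{sm}$.
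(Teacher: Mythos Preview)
Your argument is correct and follows essentially the same strategy as the paper. The only cosmetic difference is that the paper works directly on $Y$, defining $\Pi_j^Y:=\tfrac{1}{k}\,\Gamma_p\circ\Pi_j^X\circ{}^t\Gamma_p\in A^{2m}(Y\times Y)$ and checking multiplicativity via an explicit computation of ${}^t\Gamma_p\circ\Delta_Y^{sm}\circ\Gamma_{p\times p}$ (the paper's Lemma~\ref{sm}), whereas you work on the motivic model $(X,\Delta^G_X,0)$ and carry out the equivalent computation there; your ``cycle-theoretic sanity check'' is precisely Lemma~\ref{sm} in disguise. One small remark: your appeal to Manin's identity principle is not quite enough as stated, since you only match the action on $A^\ast(Y)$ rather than on $A^\ast(Y\times T)$ for all $T$; but your pullback computation $(\pi^{\times 3})^\ast\Delta_Y^{sm}$ already establishes the needed identification of correspondences (up to an inessential scalar), so no gap remains.
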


\begin{proof} Let $p\colon X\to Y$ denote the quotient morphism. One defines
    \[ \Pi_j^Y:= {1\over k} \Gamma_p\circ \Pi_j^X\circ {}^t \Gamma_p\ \ \ \in A^{2m}(Y\times Y)\ ,\]
    where $\{\Pi^X_j\}$ is the self--dual MCK decomposition of theorem \ref{hilbk}.
    This defines a self--dual CK decomposition $\{\Pi_j^Y\}$, since
    \[   \begin{split} \Pi_i^Y\circ \pi_j^Y &= {1\over k^2} \Gamma_p\circ \Pi_i^X\circ {}^t \Gamma_p  \circ   \Gamma_p\circ \Pi_j^X\circ {}^t \Gamma_p  \\
                         &= {1\over k} \Gamma_p\circ \Pi_i^X\circ \Delta^G_X \circ \Pi_j^X\circ {}^t \Gamma_p \\
                         &= {1\over k} \Gamma_p\circ \Pi_i^X\circ \Pi_j^X\circ \Delta^G_X\circ {}^t \Gamma_p \\
                         &=\begin{cases}  0 &\hbox{if\ }i\not=j\ ;\\
                                                    {1\over k} \Gamma_p\circ \Pi_i^X\circ {}^t \Gamma_p=\Pi_i^Y &\hbox{if\ }i=j\ .\\
                                                   \end{cases} 
                                              \end{split}\]   
           (Here, in the third line we have used lemma \ref{comm}.)
           
   It remains to check this CK decomposition is multiplicative. To this end, let $i,j,k$ be integers with $k\not=i+j$. We note that                                 
           \[    \begin{split}    
      \Pi^Y_k\circ \Delta_Y^{sm}\circ (\Pi^Y_i\times \Pi^Y_j)  &= {1\over k^3}\ \ \Gamma_p\circ \Pi^X_k\circ {}^t \Gamma_p\circ \Delta^Y_{sm}\circ \Gamma_{p\times p}\circ (\Pi^X_i\times \Pi^X_j)\circ {}^t \Gamma_{p\times p}\\
                          &=\Gamma_p\circ \Pi^X_k\circ \Delta^G_X\circ \Delta_X^{sm}\circ (\Delta^G_X\times \Delta^G_X)\circ   (\Pi^X_i\times \Pi^X_j)\circ {}^t \Gamma_{p\times p}\\
                       & = \Gamma_p\circ \Delta^G_X\circ \Pi^X_k\circ \Delta_X^{sm}\circ   (\Pi^X_i\times \Pi^X_j)\circ (\Delta^G_X\times \Delta^G_X)\circ {}^t \Gamma_{p\times p}\\
                       & =0\ \ \ \hbox{in}\ A^{4m}(Y\times Y\times Y)\ .\\
                       \end{split}\]
                 Here, the first equality is by definition of the $\Pi^Y_i$, the second equality is lemma \ref{sm} below, the third equality follows from lemma \ref{idempx}, and the fourth equality is the fact that the $\{\Pi^X_i\}$ are an MCK decomposition for $X$.      
        
     \begin{lemma}\label{sm} There is equality
     \[ \begin{split} {}^t \Gamma_p\circ \Delta_Y^{sm}\circ \Gamma_{p\times p}&= (\sum_{g\in G} \Gamma_g)\circ \Delta_X^{sm}\circ \bigl((\sum_{g\in G} \Gamma_g)\times 
         (\sum_{g\in G}           \Gamma_g)\bigr) \\  
         &= k^3\ \Delta^G_X\circ \Delta_X^{sm}\circ (\Delta^G_X\times \Delta^G_X)\ \ \ \hbox{in}\ A^{4m}(X\times X\times X)\ .\\
         \end{split}\]                    
                                                \end{lemma}
                                                
  \begin{proof} The second equality is just the definition of $\Delta^G_X$. As to the first equality, we first note that
         \[  \Delta_Y^{sm}  =(p\times p\times p)_\ast (\Delta_X^{sm}) = \Gamma_p\circ \Delta_X^{sm}\circ {}^t \Gamma_{p\times p}\ \ \ \hbox{in}\ A^{4m}(Y\times Y\times Y)\ .\]
                 This implies that
                 \[ {}^t \Gamma_p\circ \Delta_Y^{sm}\circ \Gamma_{p\times p}= {}^t \Gamma_p\circ \Gamma_p\circ \Delta_X^{sm}\circ {}^t \Gamma_{p\times p}\circ \Gamma_{p\times p}\ .\]
                 But ${}^t \Gamma_p\circ \Gamma_p=\sum_{g\in G} \Gamma_g$, and thus
                 \[ {}^t \Gamma_p\circ \Delta_Y^{sm}\circ \Gamma_{p\times p}=  (\sum_{g\in G} \Gamma_g)\circ \Delta_X^{sm}\circ \bigl((\sum_{g\in G} \Gamma_g)\times 
         (\sum_{g\in G}           \Gamma_g)\bigr) \ \ \ \hbox{in}\ A^{4m}(X\times X\times X)\ ,\]
         as claimed. 
         \end{proof}
  \end{proof}

   There is also the following commutativity relation:            

\begin{lemma}\label{comprod} Let 
 \[ \Xi_1\ ,\ldots, \ \Xi_m\in  A^{2}(S\times S^{m})  \]
 be as in propositions \ref{prod} and \ref{prod3}. Let $h\in\aut(S)$. The diagrams
    \[ \begin{array}[c]{ccc}
           A^{2}_{(2)}(S^m)& \xrightarrow{(({}^t\Xi_1\vert_{S^{m+1}})_\ast,\ldots, ({}^t\Xi_m\vert_{S^{m+1}})_\ast)}&
             A^2_{(2)}(S)\oplus \cdots \oplus A^2_{(2)}(S) \\
               &&\\
            \ \ \ \ \ \ \ \ \  \downarrow{(h^{\times m})_\ast}&&\ \ \ \ \ \ \ \ \ \ \ \ \ \ \ \  \downarrow{ (h_\ast,\ldots,h_\ast)}\\
            &&\\
            A^{2}_{(2)}(S^m)& \xrightarrow{(({}^t\Xi_1\vert_{S^{m+1}})_\ast,\ldots, ({}^t\Xi_m\vert_{S^{m+1}})_\ast)}&
             A^2_{(2)}(S)\oplus \cdots \oplus A^2_{(2)}(S) \\
          \end{array}\]
and
        \[ \begin{array}[c]{ccc}
            A^2_{(2)}(S)\oplus \cdots \oplus A^2_{(2)}(S)&
              \xrightarrow{(\Xi_1)_\ast+\ldots+(\Xi_m)_\ast}& A^{2m}_{(2)}(S^m)  \\
              &&\\
             \ \ \ \ \ \ \ \  \ \ \ \ \ \ \ \ \ \downarrow{ (h_\ast,\ldots,h_\ast)} &&     \ \ \ \ \  \ \ \ \downarrow{(h^{\times m})_\ast}\\
             &&\\
              A^2_{(2)}(S)\oplus \cdots \oplus A^2_{(2)}(S)&
              \xrightarrow{(\Xi_1)_\ast+\ldots+(\Xi_m)_\ast}& A^{2m}_{(2)}(S^m)  \\
          \end{array}\]
are commutative. 
  \end{lemma}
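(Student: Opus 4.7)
The plan is to reduce the two diagrams to two correspondence identities and then verify these directly, using the functoriality of the projections $p_i$ with respect to the diagonal action of $h$, together with Lemma \ref{comm}. First, I note that since $\Pi_2^S$ is self-dual (as ${}^t\Pi_2^S=\Delta_S-{}^t\Pi_0^S-{}^t\Pi_4^S=\Delta_S-\Pi_4^S-\Pi_0^S=\Pi_2^S$ from the setup in Lemma \ref{comm}), transposing $\Xi_i={}^t\Gamma_{p_i}\circ \Pi_2^S$ gives ${}^t\Xi_i=\Pi_2^S\circ \Gamma_{p_i}$. The commutativity of the first diagram is implied by the correspondence identity
\[
\Gamma_h\circ {}^t\Xi_i \;=\; {}^t\Xi_i\circ \Gamma_{h^{\times m}}\quad\text{in}\ A^2(S^m\times S),
\]
and the second by
\[
\Gamma_{h^{\times m}}\circ \Xi_i \;=\; \Xi_i\circ \Gamma_h\quad\text{in}\ A^2(S\times S^m),
\]
since $(A\circ B)_\ast=A_\ast\circ B_\ast$ and so, for example, applying $(-)_\ast$ to the second identity gives $(h^{\times m})_\ast\circ (\Xi_i)_\ast=(\Xi_i)_\ast\circ h_\ast$. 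The subgroups $A^2_{(2)}$ and $A^{2m}_{(2)}$ are preserved by all the operators involved because the graphs $\Gamma_h$ and $\Gamma_{h^{\times m}}$ are of pure grade $0$ (Remark \ref{grade0}), so the restriction to these pieces is automatic.

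To establish both correspondence identities I need just two inputs. \emph{First}, since $p_i\circ h^{\times m}=h\circ p_i$, the graphs satisfy $\Gamma_h\circ \Gamma_{p_i}=\Gamma_{p_i}\circ \Gamma_{h^{\times m}}$ in $A^2(S^m\times S)$; transposing (and using ${}^t\Gamma_h=\Gamma_{h^{-1}}$ since $h$ is an isomorphism, then replacing $h$ by $h^{-1}$) yields the dual relation $\Gamma_{h^{\times m}}\circ {}^t\Gamma_{p_i}={}^t\Gamma_{p_i}\circ \Gamma_h$. \emph{Second}, Lemma \ref{comm} gives $\Gamma_h\circ \Pi_2^S=\Pi_2^S\circ \Gamma_h$. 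Combining these, for the second identity:
\[
\Gamma_{h^{\times m}}\circ \Xi_i \;=\; \Gamma_{h^{\times m}}\circ {}^t\Gamma_{p_i}\circ \Pi_2^S \;=\; {}^t\Gamma_{p_i}\circ \Gamma_h\circ \Pi_2^S \;=\; {}^t\Gamma_{p_i}\circ \Pi_2^S\circ \Gamma_h \;=\; \Xi_i\circ \Gamma_h,
\]
and for the first, using ${}^t\Xi_i=\Pi_2^S\circ \Gamma_{p_i}$:
\[
\Gamma_h\circ {}^t\Xi_i \;=\; \Gamma_h\circ \Pi_2^S\circ \Gamma_{p_i} \;=\; \Pi_2^S\circ \Gamma_h\circ \Gamma_{p_i} \;=\; \Pi_2^S\circ \Gamma_{p_i}\circ \Gamma_{h^{\times m}} \;=\; {}^t\Xi_i\circ \Gamma_{h^{\times m}}.
\]
I expect no serious obstacle: the argument is entirely formal once Lemma \ref{comm} and the equivariance $p_i\circ h^{\times m}=h\circ p_i$ are in hand. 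The only delicate point is bookkeeping with transposes — in particular making sure that the self-duality of $\Pi_2^S$ and the identity ${}^t\Gamma_h=\Gamma_{h^{-1}}$ are invoked at the right places when passing between the two diagrams.
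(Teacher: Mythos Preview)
Your proof is correct and follows essentially the same route as the paper: both hinge on the elementary equivariance $p_i\circ h^{\times m}=h\circ p_i$ together with Lemma \ref{comm}. The only difference is one of packaging --- the paper argues at the level of actions on Chow groups (observing that on $A^2_{(2)}(S^m)$ the correspondence ${}^t\Xi_i$ acts simply as $(p_i)_\ast$, and on $A^2_{(2)}(S)$ the correspondence $\Xi_i$ acts as $(p_i)^\ast$), whereas you prove the stronger correspondence identities $\Gamma_h\circ{}^t\Xi_i={}^t\Xi_i\circ\Gamma_{h^{\times m}}$ and $\Gamma_{h^{\times m}}\circ\Xi_i=\Xi_i\circ\Gamma_h$ directly; your version is slightly more explicit but not genuinely different. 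One minor remark: your appeal to Remark \ref{grade0} for the grade-preservation of $h_\ast$ and $(h^{\times m})_\ast$ is a bit imprecise, since that remark is stated for $X=S^{[r]}$; the correct reference for $S$ and $S^m$ is Lemma \ref{comm} and equation (\ref{eachg}) respectively (which is exactly what the paper cites).
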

  
  \begin{proof} First, we observe that $h^{\times m}$ and $h$ preserve the bigrading in view of (\ref{eachg}), so the diagrams make sense. Next, we recall (proposition \ref{prod}) that
  ${}^t \Xi_i$ is defined on $A^2_{(2)}(S^m)$ as projection on the $i$th factor (which also preserves the bigrading, cf. \cite[Corollary 1.6]{SV2}). The commutativity of the first diagram now follows from the commutativity of
  \[ \begin{array}[c]{ccc}
    S^m & \xrightarrow{p_i}& S\\
    \ \ \ \downarrow{ h^{\times m}} && \ \ \ \downarrow{ h}\\
    S^m & \xrightarrow{p_i}& S\\
    \end{array}\]
  As for the second diagram: $\Xi_i$ acts on $A^2_{(2)}(S)$ as $(p_i)^\ast$. Since we can write $h_\ast=(h^{-1})^\ast$, the second diagram is also commutative.  
      \end{proof}

 
 

\subsection{Natural automorphisms of Hilbert schemes}

\begin{definition}[Boissi\`ere \cite{Bo}] Let $S$ be a surface, and let $X=S^{[m]}$ denote the Hilbert scheme of length $m$ subschemes. An automorphism $\psi\in\aut(S)$ induces an automorphism $\psi^{[m]}$ of $X$. This determines a homomorphism
  \[ \begin{split} \aut(S)\ &\to\ \aut(X)\ ,\\
              \psi\ &\mapsto\ \psi^{[m]}\ ,\\
           \end{split}\]
    which is injective \cite{Bo}. The image of this homomorphism is called the group of {\em natural automorphisms\/} of $X$.          
\end{definition}



\begin{remark} It is known \cite[Theorem 1]{BoSa} that an automorphism of a Hilbert scheme is natural if and only if it fixes the exceptional divisor of the Hilbert--Chow morphism.
To find examples of non--natural automorphisms of a Hilbert scheme $X$, Boissi\`ere and Sarti introduce the notion of {\em index\/} of an automorphism of $X$. For Hilbert schemes of a generic algebraic $K3$ surface, the index of an automorphism is $1$ if and only if the automorphism is natural \cite[section 4]{BoSa}.
\end{remark}

\section{Main result}

This section contains the proof of the main result of this note, theorem \ref{main}.

\begin{definition} Let $S$ be a $K3$ surface, and let $h\in\aut(S)$ be an automorphism of order $k$. We say that $h$ is {\em non--symplectic\/} if 
  \[ h^\ast=\nu\cdot \ide\colon\ \ \ H^{2,0}(S)\ \to\ H^{2,0}(S)\ ,\]
  where $\nu$ is a primitive $k$--th root of unity.
  
  (NB: this is sometimes referred to as a ``purely non--symplectic automorphism''.)
 \end{definition}

\begin{theorem}\label{main} 
Let $S$ be a projective $K3$ surface, and let $X=S^{[m]}$ be the Hilbert scheme of length $m$ subschemes. Let $G\subset\aut(X)$ be a subgroup of order $k$ of natural non--symplectic automorphisms. Then
  \[ A^i_{(2)}(X)\cap A^i(X)^G =0\ \ \ \hbox{for}\ i\in\{2,2m\}\ .\]
\end{theorem}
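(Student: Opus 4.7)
The strategy is a two-step reduction from the Hilbert scheme $X = S^{[m]}$ down to the underlying $K3$ surface $S$, ending with the well-known vanishing $A^2_{hom}(S)^{G_S} = 0$ for a finite group $G_S \subset \aut(S)$ of non-symplectic automorphisms. Throughout, let $G_S \subset \aut(S)$ denote the order-$k$ subgroup that induces the natural automorphism group $G = \{h^{[m]} : h \in G_S\} \subset \aut(X)$.

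First I reduce to a statement on $S^m$ via the split injection of Remark \ref{compat}, $A^i_{(2)}(X) \hookrightarrow \bigoplus_{\mu \in \Bb(m)} A^{i+l(\mu)-m}_{(2)}(S^{\mu})$. For both $i = 2m$ and $i = 2$, the product MCK on $S^r$ forces $A^j_{(2)}(S^r) = 0$ unless $2 \leq j \leq 2r$ (every nonzero summand of the product decomposition of $A^\ast_{(2)}(S^r)$ must contain exactly one $A^2_{(2)}(S)$ factor, with the other factors contributing via $A^0_{(0)}(S)$, $A^1_{(0)}(S)$, or $A^2_{(0)}(S)$); applied with $r = l(\mu)$ and $j = i + l(\mu) - m$, this forces $l(\mu) \geq m$, so only $\mu = (1^m)$ contributes. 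Hence I obtain a split injection $A^i_{(2)}(X) \hookrightarrow A^i_{(2)}(S^m)$ for $i \in \{2, 2m\}$. The commutation $\Gamma_g \circ \hat\Gamma_\mu = \hat\Gamma_\mu \circ \Gamma_{h^{\times l(\mu)}}$ used in the proof of Lemma \ref{idempx} shows this injection intertwines the $G$-action on the left with the diagonal $G_S$-action (via $h \mapsto h^{\times m}$) on the right.

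Next, I reduce from $S^m$ to $S$: Proposition \ref{prod} (respectively Proposition \ref{prod3}) provides for $i = 2m$ (resp.\ $i = 2$) a split injection $A^i_{(2)}(S^m) \hookrightarrow A^2(S)^{\oplus m}$ via the correspondences $(\Theta_j)_\ast$ (resp.\ $({}^t\Xi_j)_\ast$). Multiplicativity of the MCK forces the image to lie in the grade-$(2)$ piece, which by Beauville--Voisin equals $A^2_{hom}(S)$. Lemma \ref{comprod} (and the transposed analogue for $i = 2m$, which follows by identical argument) makes each such injection equivariant for the diagonal $G_S$-action on $A^2_{hom}(S)^{\oplus m}$.

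Composing both injections, a $G$-invariant element $a \in A^i_{(2)}(X)$ maps to an $m$-tuple $(b_1, \ldots, b_m) \in A^2_{hom}(S)^{\oplus m}$ with each $b_j$ fixed by $G_S$. Since every non-identity element of $G_S$ is non-symplectic and $\dim H^{2,0}(S) = 1$, we have $H^{2,0}(S)^{G_S} = 0$; hence the invariant part $T_S^{G_S}$ of the transcendental lattice $T_S$ is a rational sub-Hodge structure of weight $2$ with vanishing $(2,0)$-part, so purely of type $(1,1)$, and by the Lefschetz $(1,1)$-theorem would lie in $\NS(S) \otimes \QQ$; together with $T_S \cap \NS(S) = 0$ this forces $T_S^{G_S} = 0$. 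The Bloch-conjecture input for $K3$ surfaces under finite group actions (due to Voisin, Huybrechts) then yields $A^2_{hom}(S)^{G_S} = 0$, so each $b_j = 0$ and injectivity gives $a = 0$. The main obstacle is not a single deep step but the careful bookkeeping of $G$-equivariance through the correspondence calculus in both reductions; the implication $H^{2,0}(S)^{G_S} = 0 \Rightarrow A^2_{hom}(S)^{G_S} = 0$, while the deepest ingredient, is classical for $K3$ surfaces.
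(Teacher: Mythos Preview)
Your argument follows the same two--step reduction as the paper's own proof (Hilbert scheme $\to S^m$ via Remark \ref{compat}, then $S^m\to S^{\oplus m}$ via Propositions \ref{prod}/\ref{prod3}), and is correct in outline. Two small points deserve attention.

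For $i=2m$ you argue via the \emph{injection} $(\Theta_j)_\ast\colon A^{2m}_{(2)}(S^m)\hookrightarrow A^2(S)^{\oplus m}$ and claim equivariance by ``the transposed analogue'' of Lemma \ref{comprod}. That lemma only treats $\Xi$ and ${}^t\Xi$, not $\Theta$; the paper instead uses the \emph{surjection} $(\Xi_j)_\ast$ (which is covered by Lemma \ref{comprod}) together with the vanishing on the source. Your route works too, but you must check separately that each $\Theta_i$ commutes with $\Gamma_{h^{\times m}}$; this is straightforward since $\Theta_i$ is built from projections, the diagonal $\Delta_S$, and the class $\oo_S$, all of which are $h$--equivariant (the last by the argument in Lemma \ref{comm}).

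Your final step --- the implication $H^{2,0}(S)^{G_S}=0\Rightarrow A^2_{hom}(S)^{G_S}=0$ --- is attributed to Voisin and Huybrechts. Their theorems concern \emph{symplectic} automorphisms (showing such $g$ act trivially on $CH_0$), which is the opposite situation. The paper obtains the needed vanishing differently (its Lemma \ref{lemma2}): the quotient $R=S/G_S$ has $p_g=0$, a resolution $\widetilde R$ is not of general type (Kodaira dimension $\le 0$), hence Bloch's conjecture holds for $\widetilde R$ by Bloch--Kas--Lieberman, giving $A^2_{hom}(R)=0$ and thus $A^2_{hom}(S)^{G_S}=0$. The conclusion you need is correct, but the justification should go through BKL rather than the symplectic results.
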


\begin{proof} Let us start with the case $i=2$, i.e. codimension $2$ cycles. 
To prove the required vanishing 
  \[A^2_{(2)}(X)\cap A^2(X)^G =0\] 
  is equivalent to showing that
  \begin{equation}\label{corr2}  \bigl( \Delta^G_X\circ \Pi_2^X\bigr){}_\ast =0\ \colon\ \ \ A^2(X)\ \to\ A^2(X)\ ,\end{equation}
  where $\Pi_2^X$ is part of an MCK decomposition for $X$.
  
  As we have seen (remark \ref{compat}, plus the obvious fact that $A^1_{(2)}(S^{m-1})=0$), there is a commutative diagram
  \[ \begin{array}[c]{ccc}
       A^2_{(2)}(X) & \hookrightarrow & A^2_{(2)}(S^m)\\
        &&\\
          \ \ \ \  \downarrow {\scriptstyle (\Delta_X^G)_\ast} &&\ \ \ \downarrow {\scriptstyle (\Delta^G_{S^m})_\ast} \\
          &&\\
     A^2_{(2)}(X) & \hookrightarrow & A^2_{(2)}(S^m)\\
     \end{array}\]
   where horizontal arrows are split injective. Here, the correspondence $\Delta^G_{S^m}$ is defined as
     \[  \Delta^G_{S^m}:= {\displaystyle\sum_{h\in G_{S}}} \Gamma_h\times\Gamma_h\times\cdots\times\Gamma_h\in A^{2m}(S^m\times S^m)\ ,\]   
  and the diagram commutes because of the construction of natural automorphisms of $X$.   
     
     To prove (\ref{corr2}), we are thus reduced to proving that
      \begin{equation}\label{corr3}  \bigl( \Delta^G_{S^m}\circ \Pi_2^{S^m}\bigr){}_\ast =0\ \colon\ \ \ A^2(S^m)\ \to\ A^2(S^m)\ ,\end{equation}
      where $\Pi_2^{S^m}$ is part of an MCK decomposition $\{\Pi_i^{S^m}\}$ for $S^m$. We will suppose $\{\Pi_i^{S^m}\}$ is the product MCK decomposition used in the proof of theorem \ref{hilbk}.   
      
   We state a lemma:

\begin{lemma}\label{lemma2} The surface $R:=S/G_S$ has
  \[ A^2_{hom}(R)=0\ .\]
  Equivalently, for any MCK decomposition $\{\Pi^S_j\}$ one has
  \[ (\Delta^G_S\circ \Pi_2^S)_\ast=0\colon\ \ \ A^2(S)\ \to\ A^2(S)\ .\]
   \end{lemma}
 
   \begin{proof} The quotient variety $R$ has geometric genus $0$. Since quotient singularities are rational singularities, there exists a resolution $Y\to R$ with $p_g(Y)=0$. Since $Y$ is not of general type, Bloch's conjecture is known to hold for $Y$ \cite{BKL}, i.e. $A^2_{hom}(Y)=0$. This implies that also $A^2_{hom}(R)=0$.
 \end{proof}
 
Armed with this lemma, we can prove the vanishing (\ref{corr3}): 
There is a commutative diagram
  \[ \begin{array}[c]{ccc}
           A^{2}_{(2)}(S^m)& \xrightarrow{(({}^t\Xi_1\vert_{S^{m+1}})_\ast,\ldots, ({}^t\Xi_m\vert_{S^{m+1}})_\ast)}&
             A^2_{(2)}(S)\oplus \cdots \oplus A^2_{(2)}(S) \\
               &&\\
            \ \ \ \ \ \ \ \ \  \downarrow{(\Delta^G_{S^m})_\ast}&&\ \ \ \ \ \ \ \ \ \ \ \ \ \ \ \  \downarrow{ ((\Delta^G_S)_\ast,\ldots,(\Delta^G_S)_\ast)}\\
            &&\\
            A^{2}_{(2)}(S^m)& \xrightarrow{(({}^t\Xi_1\vert_{S^{m+1}})_\ast,\ldots, ({}^t\Xi_m\vert_{S^{m+1}})_\ast)}&
             A^2_{(2)}(S)\oplus \cdots \oplus A^2_{(2)}(S) \\
          \end{array}\]
   The commutativity of this diagram is lemma \ref{comprod}. Horizontal arrows are injections thanks to proposition \ref{prod3}. Since the right vertical arrow is the zero map (lemma \ref{lemma2}), the left vertical arrow is also the zero map; this proves the vanishing (\ref{corr3}).

 The statement for $i=2m$ is proven similarly: 
 in view of remark \ref{compat}, there is a commutative diagram
     \[ \begin{array}[c]{ccc}
       A^{2m}_{(2)}(X) & \hookrightarrow & A^{2m}_{(2)}(S^m)\\
        &&\\
          \ \ \ \ \ \ \  \downarrow {\scriptstyle (\Delta_X^G)_\ast} &&\ \ \ \downarrow {\scriptstyle (\Delta^G_{S^m})_\ast} \\
          &&\\
     A^{2m}_{(2)}(X) & \hookrightarrow & A^{2m}_{(2)}(S^m)\\
     \end{array}\]
   where horizontal arrows are split injective. It thus suffices to prove the right vertical arrow is the zero map.
 
 Thanks to proposition \ref{prod} and lemma \ref{comprod}, there is a commutative diagram
   \[ \begin{array}[c]{ccc}
            A^2_{(2)}(S)\oplus \cdots \oplus A^2_{(2)}(S)&
              \xrightarrow{(\Xi_1)_\ast+\ldots+(\Xi_m)_\ast}& A^{2m}_{(2)}(S^m)  \\
              &&\\
             \ \ \ \ \ \ \ \  \ \ \ \ \ \ \ \ \ \downarrow{ ((\Delta^G_S)_\ast,\ldots,(\Delta^G_S)_\ast)} &&     \ \ \ \ \  \ \ \ \downarrow{(\Delta^G_{S^m})_\ast}\\
             &&\\
              A^2_{(2)}(S)\oplus \cdots \oplus A^2_{(2)}(S)&
              \xrightarrow{(\Xi_1)_\ast+\ldots+(\Xi_m)_\ast}& A^{2m}_{(2)}(S^m)  \\
          \end{array}\]
          where horizontal arrows are surjections. Combined with lemma \ref{lemma2}, this settles the $i=2m$ case.  
                                       \end{proof}

\begin{remark} Let $X$ and $G$ be as in theorem \ref{main}. Let $X^\prime$ be a hyperk\"ahler variety birational to $X$, and let $G^\prime$ be the group of birational self--maps of $X^\prime$ induced by $G$.
Applying proposition \ref{birat}, it follows from theorem \ref{main} that also
  \[  A^i_{(2)}(X^\prime)\cap A^i(X^\prime)^{G^\prime} =0\ \ \ \hbox{for}\ i\in\{2,2m\}\ .\]
\end{remark}

\section{Some corollaries}

\begin{corollary}\label{cor1} Let $X$ and $G$ be as in theorem \ref{main}, and let $Y:=X/G$ be the quotient. For any $r\in\NN$, let
  \[ E^\ast(Y^r)\ \subset\ A^\ast(Y^r) \]
  be the subalgebra generated by (pullbacks of) $A^1(Y)$ and $A^2(Y)$ and $\Delta_Y$, $\Delta_Y^{sm}$. Then the cycle class map induces maps
  \[ E^i(Y^r)\ \to\ H^{2i}(Y^r) \]
  that are injective for $i\ge 2mr-1$.
  \end{corollary}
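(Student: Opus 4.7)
My approach is to combine Theorem \ref{main} with the MCK bigrading on $Y^r$ to confine $E^*(Y^r)$ to the grade-$0$ subring, and then to verify that the cycle class map is injective on $A^i_{(0)}(Y^r)$ in the top two codimensions.

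First I would show that $E^i(Y^r) \subseteq A^i_{(0)}(Y^r)$, using the product MCK on $Y^r$ coming from Lemma \ref{quotientmck} and \cite[Theorem 8.6]{SV}. Each generator lies in grade $0$: by Theorem \ref{hilbk}, $A^1(X) = A^1_{(0)}(X)$ and $A^2(X) = A^2_{(0)}(X) \oplus A^2_{(2)}(X)$, and Theorem \ref{main} kills the grade-$(2)$ summand after taking $G$-invariants, so $A^2(Y) = A^2_{(0)}(Y)$; Lemma \ref{diag} places $\Delta_Y$ and $\Delta_Y^{sm}$ in grade $0$. Pullbacks via the standard projections $Y^r \to Y^s$ are exterior products with $[Y^{r-s}]$ and hence preserve grade $0$, and multiplicativity of MCK closes the subring under intersection. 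Hence $E^*(Y^r) \subseteq A^*_{(0)}(Y^r)$.

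Next I would prove that the cycle class map is injective on $A^i_{(0)}(Y^r)$ for $i \in \{2mr-1, 2mr\}$. Via the identification $A^i_{(j)}(Y^r) \cong A^i_{(j)}(X^r)^{G^r}$ (which follows from the definition $\Pi^Y_j = \frac{1}{k}\Gamma_p \circ \Pi^X_j \circ {}^t\Gamma_p$ together with Lemma \ref{idempx} applied factor-wise) and the analogous identification in cohomology, the problem reduces to injectivity on $A^i_{(0)}(X^r)$. The de Cataldo-Migliorini isomorphism (Remark \ref{compat}), applied factor-wise, gives
\[
A^i_{(0)}(X^r) \cong \bigoplus_{\mu_\bullet \in \Bb(m)^r} A^{i - mr + \sum l(\mu_k)}_{(0)}\bigl(S^{(\mu_1)} \times \cdots \times S^{(\mu_r)}\bigr),
\]
compatibly with the cycle class map, and each $S^{(\mu)} = S^{l(\mu)}/\Sy_{l(\mu)}$ is dominated by $S^{l(\mu)}$, giving further injections on Chow groups and cohomology. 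For $i = 2mr$ only the summand $\mu_k = (1^m)$ for all $k$ survives ($0$-cycles on $(S^{(m)})^r$); for $i = 2mr-1$ the surviving summands have $\sum l(\mu_k) \in \{mr-1, mr\}$, yielding $1$-cycles on $(S^{(m)})^r$ or $0$-cycles on $S^{(m-1)} \times (S^{(m)})^{r-1}$. It thus suffices to check that $A^{2n}_{(0)}(S^n)$ and $A^{2n-1}_{(0)}(S^n)$ inject into cohomology for $n \in \{mr-1, mr\}$. Using the bigrading on $S$, where $A^2_{(0)}(S) = \QQ \cdot \oo_S$ and $A^1_{(0)}(S) = \NS(S)_\QQ$, multiplicativity of the product MCK yields
\[
A^{2n}_{(0)}(S^n) = \QQ \cdot \oo_S^{\boxtimes n}, \qquad A^{2n-1}_{(0)}(S^n) = \bigoplus_{k=1}^n p_k^* \NS(S)_\QQ \cdot \prod_{l \ne k} p_l^* \oo_S,
\]
both of which inject into singular cohomology ($\NS(S)_\QQ \hookrightarrow H^2(S,\QQ)$ by definition).

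The main obstacle is the compatibility bookkeeping: verifying that the de Cataldo-Migliorini identification, the $G^r$-invariance, and the product MCK on the various $r$-fold products all line up with the bigrading as required, and in particular that the explicit description of $A^*_{(0)}(S^n)$ in top two codimensions follows from the bigraded multiplicativity of the product MCK. The core substance, however, lies already in Theorem \ref{main}, which is precisely what allows $A^2(Y)$ to be placed in grade $0$ and thereby forces the intersection products generating $E^*(Y^r)$ into the injective range.
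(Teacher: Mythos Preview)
Your approach is essentially the paper's: confine $E^\ast(Y^r)$ to low grade using Theorem \ref{main} and Lemma \ref{diag}, then use injectivity of the cycle class map in the top two codimensions. One imprecision to flag: Theorem \ref{hilbk} as stated only gives $A^i_{(j)}(X)=0$ for $j$ odd or $j>i$, not for $j<0$, so your claim that $A^1(X)=A^1_{(0)}(X)$ and $A^2(X)=A^2_{(0)}(X)\oplus A^2_{(2)}(X)$ is not justified by that citation alone. The paper sidesteps this by only asserting $E^\ast(Y^r)\subset\bigoplus_{j\le 0}A^\ast_{(j)}(Y^r)$, and then in the second step simply citing from \cite[Introduction]{V6} both the vanishing $A^i_{(j)}(X^r)=0$ for $j<0$ and the injectivity of the cycle class on $A^i_{(0)}(X^r)$, for $i\ge 2mr-1$, rather than re-deriving these via the de Cataldo--Migliorini isomorphism as you do. Your explicit reduction to $S^n$ is correct and more self-contained, and in fact it would also yield the needed vanishing of negative grades in top codimension (the projector $\Pi^{S^n}_\ell$ vanishes for $\ell>4n$, and $(\Pi^{S^n}_{4n})_\ast$ kills $1$-cycles), so the repair is minor.
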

  
  \begin{proof} First, it follows from lemma \ref{quotientmck} that $Y$, and hence $Y^r$, has a self--dual MCK decomposition. Consequently, the Chow ring $A^\ast(Y^r)$ is a bigraded ring. Theorem \ref{main} (plus the obvious fact that $A^1_{hom}(Y)=0$) implies that
  \[  A^i(Y)= \bigoplus_{j\le 0}  A^i_{(j)}(Y)\ \ \ \hbox{for}\ i\le 2\ .\]
  Lemma \ref{diag} ensures that
  \[ \Delta_Y\in A^{2m}_{(0)}(Y\times Y)\ ,\ \ \ \Delta_Y^{sm}\in A^{4m}_{(0)}(Y^3)\ .\]
  Since pullbacks for projections of type $Y^r\to Y^{s}, s<r$, preserve the bigrading (this follows from \cite[Corollary 1.6]{SV2}, or alternatively can be checked directly), this implies that
  \[ E^\ast(Y^r)\subset  \bigoplus_{j\le 0} A^\ast_{(j)}(Y^r)\ .\]
 The corollary now follows from the fact that
  \[  A^i_{(j)}(Y^r)\ \to\  A^i_{(j)}(X^r) \]
  is injective (this is true for any $i$ and $j$), and the fact that
  \[ \begin{split}  &A^i_{(j)}(X^r)=0\ \ \ \hbox{for}\ i\ge 2mr-1\ \hbox{and}\ j<0\ ,\\
     &A^i_{(0)}(X^r)\cap A^i_{hom}(X^r) =0\ \ \ \hbox{for}\ i\ge 2mr-1\\
     \end{split} \ \]
  (as noted in \cite[Introduction]{V6}).
    \end{proof}
  
 \begin{corollary}\label{cor2} Let $X$ and $G$ be as in theorem \ref{main}, and let $Y:=X/G$ be the quotient. Let $a\in A^{2m}(Y)$ be a $0$--cycle which is in the image of the intersection product map
   \[  A^3(Y)\otimes A^{i_1}(Y)\otimes\cdots\otimes A^{i_s}(Y)\ \to\ A^{2m}(Y)\ ,\]
   with all $i_j\le 2$ (and $i_1+\cdots+i_s=2m-3$). Then $a$ is rationally trivial if and only if $\deg(a)=0$.
   \end{corollary}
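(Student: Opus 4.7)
The plan is to show that any such $a$ automatically lies in the degree-zero piece $A^{2m}_{(0)}(Y)$ of the MCK bigrading on $Y$, and then to invoke the vanishing $A^{2m}_{(0)}(Y) \cap A^{2m}_{hom}(Y) = 0$. Since $Y$ is connected, a $0$-cycle is homologically trivial if and only if it has degree $0$, so this settles both directions of the equivalence (the ``only if'' direction being trivial).

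The MCK structure on $Y$ is provided by lemma \ref{quotientmck}, with projectors $\Pi_j^Y = \tfrac{1}{k}\Gamma_p \circ \Pi_j^X \circ {}^t\Gamma_p$. Combining this formula with lemma \ref{idempx} (which says $\Delta^G_X$ commutes with $\Pi_j^X$), pullback along $p\colon X\to Y$ is injective and intertwines the bigradings: $p^\ast A^i_{(j)}(Y) \subset A^i_{(j)}(X)$, with image $A^i_{(j)}(X)^G$. In particular, the vanishings $A^i_{(j)}(X)=0$ for $j$ odd or $j>i$ (theorem \ref{hilbk}) transfer to $Y$. Moreover, theorem \ref{main} in the case $i=2m$ now reads $A^{2m}_{(2)}(Y)=0$; and in the case $i=2$, together with $A^1_{hom}(Y)=0$, it yields $A^i(Y)=A^i_{(0)}(Y)$ for every $i\le 2$, as already recorded in the proof of corollary \ref{cor1}.

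Now write $a$ as a sum of products $b\cdot c_1\cdots c_s$ with $b\in A^3(Y)$ and $c_j\in A^{i_j}(Y)$, all $i_j\le 2$. By the previous step each $c_j$ lies in $A^{i_j}_{(0)}(Y)$, while odd-grade vanishing together with the inherited $j\le i$ vanishing forces $A^3(Y) = A^3_{(0)}(Y)\oplus A^3_{(2)}(Y)$. Decompose $b = b_{(0)} + b_{(2)}$ and use the multiplicativity of the bigrading: we obtain $a = a_{(0)} + a_{(2)}$ with $a_{(\ell)}\in A^{2m}_{(\ell)}(Y)$. The piece $a_{(2)}$ vanishes because $A^{2m}_{(2)}(Y)=0$, and hence $a\in A^{2m}_{(0)}(Y)$.

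To conclude, the injection $p^\ast\colon A^{2m}_{(0)}(Y)\hookrightarrow A^{2m}_{(0)}(X)$ combined with the vanishing $A^{2m}_{(0)}(X)\cap A^{2m}_{hom}(X)=0$ from \cite{V6} (already invoked in the proof of corollary \ref{cor1}) gives $A^{2m}_{(0)}(Y)\cap A^{2m}_{hom}(Y)=0$, completing the proof. The whole argument is essentially bookkeeping with the bigrading once theorem \ref{main} is in hand; there is no serious obstacle, the one delicate point being the compatibility of the pullback $p^\ast$ with the bigradings, which is a direct consequence of lemma \ref{idempx} and the definition of $\Pi_j^Y$.
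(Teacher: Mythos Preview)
Your approach is essentially the same as the paper's: use theorem \ref{main} and multiplicativity of the bigrading to force $a\in A^{2m}_{(0)}(Y)$, then conclude via injectivity of the cycle class on that piece.

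One minor imprecision: from odd-grade vanishing and the vanishing for $j>i$ (theorem \ref{hilbk}) you only obtain
\[
A^3(Y)=\bigoplus_{\substack{r\le 2\\ r\ \text{even}}}A^3_{(r)}(Y)\ ,\qquad
A^{i_j}(Y)=\bigoplus_{\substack{r\le 0\\ r\ \text{even}}}A^{i_j}_{(r)}(Y)\ \ (i_j\le 2)\ ;
\]
the vanishing of the negative-grade pieces is \emph{not} known in general (this is Murre's conjecture B, cf.\ the remark after the definition of MCK). So your intermediate claims $A^{i_j}(Y)=A^{i_j}_{(0)}(Y)$ and $A^3(Y)=A^3_{(0)}(Y)\oplus A^3_{(2)}(Y)$ are not justified, and the decomposition should read $a\in\bigoplus_{r\le 2}A^{2m}_{(r)}(Y)$ rather than $a=a_{(0)}+a_{(2)}$. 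The paper handles this by noting that $A^{2m}_{(r)}(Y)=0$ for $r<0$ is automatic for $0$-cycles (there is no projector $\Pi^Y_\ell$ with $\ell>4m$), so the conclusion $a\in A^{2m}_{(0)}(Y)$ survives and your argument is ultimately correct.
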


\begin{proof} The point is that
   \[  \begin{split} A^3(Y)&= \bigoplus_{r\le 2}  A^3_{(r)}(Y)\ ,\\
                           A^{i_j}(Y)&=\bigoplus_{r\le 0} A^{i_j}_{(r)}(Y)\ \ \ \hbox{for}\ i_j\le 2\ \\
            \end{split}\]
         (theorem \ref{main}),   
     and so
     \[ a\in \bigoplus_{r\le 2} A^{2m}_{(r)}(Y)\ .\]
     But we know that $A^{2m}_{(r)}(Y)=0$ for $r<0$ (this is a general fact for any variety with an MCK decomposition), and we have seen that $A^{2m}_{(2)}(Y)=0$ (theorem \ref{main}), and so 
     \[         a\in A^{2m}_{(0)}(Y)\cong\QQ\ .\]
    \end{proof}

 \begin{remark} Results similar to corollaries \ref{cor1} and \ref{cor2} have been obtained for $0$--cycles on certain Calabi--Yau varieties. 
  If $Y$ is a Calabi--Yau variety (of dimension $n$) that is a generic complete intersection in projective space, it is known that the image of the intersection product
  \[ \ima\Bigl(  A^i(Y)\otimes A^{n-i}(Y)\ \to\ A^n(Y) \Bigr)\ ,  \ \ 0<i<n\ ,\]
  is of dimension $1$, and hence injects into cohomology \cite{V13}, \cite{LFu}. 
  
  Going beyond the Calabi--Yau case, there is also a result of L. Fu for generic hypersurfaces $Y$ of {\em general type\/}. Here, the image of the intersection product
    \[ \ima\Bigl(  A^{i_1}(Y)\otimes A^{i_2}(Y)\otimes\cdots\otimes A^{i_m}(Y)\ \to\ A^n(Y) \Bigr)\ ,  \ \ i_j>0\ ,\]
  is again of dimension $1$, provided $m$ is large enough relative to the degree of $Y$ \cite[Theorem 2.13]{LFu}. This is very similar to the behaviour of the Chow ring exhibited in corollary \ref{cor2}.
   \end{remark}






\vskip0.6cm

\begin{acknowledgements} Thanks to Len, Kai and Yasuyo for numerous pleasant coffee breaks. 
\end{acknowledgements}

\vskip0.6cm


\begin{thebibliography}{dlPG99}


\bibitem{AS} M. Artebani and A. Sarti, Non--symplectic automorphisms of order $3$ on $K3$ surfaces, Math. Ann. 342 No. 4 (2008), 903---921,

\bibitem{AST}
M. Artebani, A. Sarti and S. Taki, $K3$ surfaces with non--symplectic automorphisms of prime order, Math. Z. 268 (2011), 507---533,

\bibitem{AST2} D. Al Tabbaa, A. Sarti and S. Taki, Classification of order $16$ non--symplectic automorphisms on $K3$ surfaces, J. Korean Math. Soc.,

\bibitem{Beau0} A. Beauville, Some remarks on K\"ahler manifolds with $c_1=0$, in: Classification of algebraic and analytic manifolds (Katata, 1982), Birkh\"auser Boston, Boston 1983,

\bibitem{Beau1} A. Beauville, Vari\'et\'es K\"ahleriennes dont la premi\`ere classe de Chern est nulle, J. Differential Geom. 18 no. 4 (1983), 755---782,

\bibitem{Beau} A. Beauville, Sur l'anneau de Chow d'une vari\'et\'e ab\'elienne, Math. Ann. 273 (1986), 647---651,

\bibitem{Beau3} A. Beauville, On the splitting of the Bloch--Beilinson filtration, in: Algebraic cycles and motives (J. Nagel and C. Peters, editors), London Math. Soc. Lecture Notes 344, Cambridge 
University Press 2007,


\bibitem{BV} A. Beauville and C. Voisin, On the Chow ring of a $K3$ surface, J. Alg. Geom. 13 (2004), 417---426,


\bibitem{B} S. Bloch, Lectures on algebraic cycles, Duke Univ. Press Durham 1980,

\bibitem{BKL} S. Bloch, A. Kas and D. Lieberman, Zero cycles on surfaces with $p_g=0$, Comp. Math. 33 no. 2 (1976), 135---145,



\bibitem{Bo} S. Boissi\`ere, Automorphismes naturels de l'espace de Douady de points sur une surface, Canad. J. Math. (2009),


\bibitem{BoSa} S. Boissi\`ere and A. Sarti, A note on automorphisms and birational transformations of holomorphic symplectic manifolds, Proc. Amer. Math. Soc. 140 (2012), 4053---4062,

\bibitem{CM} 
M. de Cataldo and L. Migliorini, The Chow groups and the motive of the Hilbert scheme of points on a
surface, Journal of Algebra 251 no. 2 (2002), 824---848,


\bibitem{LFu} L. Fu, Decomposition of small diagonals and Chow rings of hypersurfaces and Calabi--Yau complete intersections, Advances in Mathematics (2013), 894---924,


\bibitem{LFu2} L. Fu, On the action of symplectic automorphisms on the $CH_0$--groups of some hyper-K\"ahler fourfolds, Math. Z. 280 (2015), 307---334,

\bibitem{FTV} L. Fu, Z. Tian and C. Vial, Motivic hyperk\"ahler resolution conjecture for generalized Kummer varieties, arXiv:1608.04968,

\bibitem{F} W. Fulton, Intersection theory, Springer--Verlag Ergebnisse der Mathematik, Berlin Heidelberg New York Tokyo 1984,

\bibitem{GP} A. Garbagnati and M. Penegini, K3 surfaces with a non--symplectic automorphism and product--quotient surfaces,
Rev. Mat. Iberoam. 31 vol. 4 (2015), 1277---1310,

\bibitem{Hu} D. Huybrechts, Symplectic automorphisms of $K3$ surfaces of arbitrary order, Math. Res. Letters 19 (2012), 947---951,

\bibitem{J2} U. Jannsen, Motivic sheaves and filtrations on Chow groups, in: Motives (U. Jannsen et alii, eds.), Proceedings of Symposia in Pure Mathematics Vol. 55 (1994), Part 1,  



\bibitem{KMP} B. Kahn, J. Murre and C. Pedrini, On the transcendental part of the motive of a surface, in: Algebraic cycles and motives (J. Nagel and C. Peters, editors), Cambridge University Press, Cambridge 2007,



\bibitem{Kon} S. Kondo, Automorphisms of algebraic $K3$ surfaces which act trivially on Picard
groups, J. Math. Soc. Japan, 44, No. 1 (1992), 75---98,


\bibitem{kyoto} R. Laterveer, Algebraic varieties with small Chow groups, J. Math. Kyoto Univ. Vol. 38 No 4 (1998), 673---694,

\bibitem{EPW} R. Laterveer, Algebraic cycles on a very special EPW sextic, submitted,

\bibitem{BlochHK4} R. Laterveer, Bloch's conjecture for certain hyperk\"ahler fourfolds, and EPW sextics, submitted,

\bibitem{LSYmoi} R. Laterveer, Algebraic cycles on some special hyperk\"ahler varieties, to appear in Rendiconti di Matematica e delle sue applicazioni,

\bibitem{LSY} R. Livn\'e, M. Sch\"utt and N. Yui, The modularity of $K3$ surfaces with non--symplectic group actions, Math. Ann. 348 (2010), 333---355,




\bibitem{Mur} J. Murre, On a conjectural filtration on the Chow groups of an algebraic variety, parts I and II, Indag. Math. 4 (1993), 177---201,

\bibitem{MNP} J. Murre, J. Nagel and C. Peters, Lectures on the theory of pure motives, Amer. Math. Soc. University Lecture Series 61, Providence 2013,


\bibitem{Nik1} V. Nikulin, Finite automorphism groups of K\"ahlerian surfaces of type $K3$, Trans. Moscow Math. Soc. 38 No. 2 (1980), 71---135,

\bibitem{Nik2} V. Nikulin, Factor groups of groups of automorphisms of hyperbolic forms with respect to subgroups generated by $2$-reflections, Algebrogeometric applications, J. Soviet Math. 22 
(1983), 1401---1475,

\bibitem{OS} P. O'Sullivan, Algebraic cycles on an abelian variety, J. Reine Angew. Math. 654 (2011), 1---81,

\bibitem{Rie} U. Rie\ss, On the Chow ring of birational irreducible symplectic varieties, Manuscripta Math. 145 (2014), 473---501,

\bibitem{Sc} T. Scholl, Classical motives, in: Motives (U. Jannsen et alii, eds.), Proceedings of Symposia in Pure Mathematics Vol. 55 (1994), Part 1, 

\bibitem{Schu} M. Sch\"utt, $K3$ surfaces with non--symplectic automorphisms of $2$--power order, Journal of Algebra 323 (2010), 206---223,

\bibitem{SV} M. Shen and C. Vial, The Fourier transform for certain hyperK\"ahler fourfolds, Memoirs of the AMS 240 (2016), no.1139,

\bibitem{SV2} M. Shen and C. Vial, The motive of the Hilbert cube $X^{[3]}$, Forum Math. Sigma 4 (2016), 

\bibitem{Tak} S. Taki, Non--symplectic automorphisms of $3$--power order on $K3$ surfaces, Proc. Japan Acad. 86, Ser. A (2010), 125---130,


\bibitem{V3} C. Vial, Remarks on motives of abelian type, to appear in Tohoku Math. J.,

\bibitem{V4} C. Vial, Niveau and coniveau filtrations on cohomology groups and Chow groups, Proceedings of the LMS 106(2) (2013), 410---444,


\bibitem{V6} C. Vial, On the motive of some hyperk\"ahler varieties, to appear in J. f\"ur Reine u. Angew. Math.,







\bibitem{V13} C. Voisin, Chow rings and decomposition theorems for $K3$ surfaces and Calabi--Yau hypersurfaces, Geom. Topol.
16 (2012), 433---473,


\bibitem{V0} C. Voisin, The generalized Hodge and Bloch conjectures are equivalent for general complete intersections, Ann. Sci. Ecole Norm. Sup. 46, fascicule 3 (2013), 449---475,



\bibitem{Vo} C. Voisin, Chow Rings, Decomposition of the Diagonal, and the Topology of Families, Princeton University Press, Princeton and Oxford, 2014,


\bibitem{Vor} S. Vorontsov, Automorphisms of even lattices arising in connection with automorphisms
of algebraic $K3$ surfaces (in Russian), Vestnik Moskov. Uni. Ser. I
mat. Mekh. No. 2 (1983), 19---21.




\end{thebibliography}
\end{document}